\newtheorem{theorem}{Theorem}[section]
\newtheorem{corollary}[theorem]{Corollary}
\newtheorem{lemma}[theorem]{Lemma}
\theoremstyle{definition}
\newtheorem{definition}[theorem]{Definition}
\newcommand{\RNum}[1]{\uppercase\expandafter{\romannumeral #1\relax}}
\begin{document}
\title{An improved upper bound for the Erd\H{o}s-Szekeres conjecture}
\author{Hossein Nassajian Mojarrad\thanks{EPFL, Lausanne. Partially supported by Swiss National Science Foundation Grants 200020-144531, 200021-137574, 200020-162884 and 200020-144531.} \and Georgios Vlachos \thanks{MIT, Cambridge.}
}
\date{}
\maketitle

\begin{abstract}
Let $ES(n)$ denote the minimum natural number such that every set of $ES(n)$ points in general position in the plane contains $n$ points in convex position. In 1935,  Erd\H{o}s and Szekeres proved that $ES(n) \le {2n-4 \choose n-2}+1$. In 1961, they obtained the lower bound $2^{n-2}+1 \le ES(n)$, which they conjectured to be optimal. In this paper, we prove that $$ES(n) \le {2n-5 \choose n-2}-{2n-8 \choose n-3}+2 \approx \frac{7}{16} {2n-4 \choose n-2}.$$  

\end{abstract}


\section{Introduction} 

In 1933, Esther Klein asked about the existence of the least natural number $ES(n)$ such that every set of $ES(n)$ points in general position in the plane contains $n$ points in convex position. Erd\H{o}s and Szekeres \cite{ES35} gave a positive answer in 1935, by proving that $ES(n) \le {2n-4 \choose n-2}+1$. In 1961, they gave a construction in \cite{ES61} and proved that $2^{n-2}+1 \le ES(n)$. The lower bound is conjectured to be tight.

In 1998, Chung and Graham \cite{CG} improved the upper bound by $1$. In the same year, Kleitman and Pachter \cite{KP} proved that $ES(n) \le {2n-4 \choose n-2}-2n+7$. Shortly after that, T\'{o}th and Valtr \cite{TV98} improved the bound roughly by a factor of $2$ by showing that $ES(n) \le {2n-5 \choose n-2}+2$. In \cite{TV03}, they combined the ideas from \cite{TV98} and \cite{CG} to improve this bound by another $1$. We refer the reader to \cite{BMP} for other questions and results related to the Erd\H{o}s-Szekeres theorem.

In his recent paper \cite{V}, Vlachos further improved the bound and showed that
\begin{equation} 
\tag{1}
ES(n) \le {2n-5 \choose n-2}-{2n-8 \choose n-3}+{2n-10 \choose n-3}+2,
\label{eq1}
\end{equation}
which implies 
\begin{equation} 
\tag{2}
\limsup\limits_{n\rightarrow\infty} \frac{ES(n)}{{2n-5 \choose n-2}} \le \frac{29}{32}.
\label{eq2}
\end{equation}
\\
Vlachos' manuscript \cite{V} has revitalized the subject and has led to further improvements. Using slightly different techniques, Norin and Yuditsky \cite{NY} showed that 
\begin{equation} 
\tag{2$\mathrm{'}$}
\limsup\limits_{n\rightarrow\infty} \frac{ES(n)}{{2n-5 \choose n-2}} \le \frac{7}{8}.
\label{eq3}
\end{equation}

On the other hand, during the refereeing process of \cite{V}, each of the two current authors independently fine-tuned the original arguments of \cite{V} to get rid of the term ${2n-10 \choose n-3}$ in \eqref{eq1}.
\\Our main result is the following.
\begin{theorem} \label{main}
Let $ES(n)$ be the minimum natural number such that every set of $ES(n)$ points in the plane contains $n$ points in convex position. For any natural number $n \ge 6$, we have 
$$ES(n) \le {2n-5 \choose n-2}-{2n-8 \choose n-3}+2.$$
\end{theorem}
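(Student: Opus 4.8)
The plan is to run the cup--cap machinery of Erd\H{o}s--Szekeres, in the refined form developed by T\'oth--Valtr \cite{TV98,TV03} and Vlachos \cite{V}, and then to replace the critical case of Vlachos' argument by a sharper one; this is exactly the modification that makes the term $\binom{2n-10}{n-3}$ of \eqref{eq1} disappear.

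First I would recall the set-up. Order the points of a set $X$ in general position by increasing $x$-coordinate (we may assume distinct $x$-coordinates). A $k$-cup is a $k$-point subset whose consecutive slopes increase, an $\ell$-cap one whose consecutive slopes decrease, and the classical lemma is that every $\binom{k+\ell-4}{k-2}+1$ points contain a $k$-cup or an $\ell$-cap, whence $ES(n)\le\binom{2n-4}{n-2}+1$. The observation driving all later improvements is that a convex $n$-gon is exactly the union of a cap and a cup sharing only their common leftmost and common rightmost vertex: the upper hull of the $n$-gon is a cap, the lower hull is a cup, the two chains meet only at the two extreme vertices, and together they contain $n+2$ points. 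Hence to produce $n$ points in convex position it is enough to find a point $w$, a cap ending at $w$ and a cup ending at $w$ whose left endpoints agree and whose sizes sum to $n+2$; the interior of such a cap lies above the segment through its endpoints and that of such a cup lies below, so they automatically overlap only at those two endpoints and their union really is a convex polygon.

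Next I would set up the refined bookkeeping of \cite{TV98,TV03,V}. One introduces auxiliary quantities measuring the largest size of a point set that contains no convex $n$-gon and, in addition, no cup of some prescribed length and no cap of some prescribed length, and derives Erd\H{o}s--Szekeres-type recursions for them by colouring each point according to the longest cup and the longest cap that terminate at it and splitting the set accordingly. Tracking cups and caps simultaneously is what yields an improvement: once a sufficiently long cup and a sufficiently long cap share a right endpoint and have been arranged to share a left endpoint as well, they splice into a convex $n$-gon, so such a point cannot contribute a fresh count to both colour classes. Carrying this out with the classical bookkeeping recovers the T\'oth--Valtr bound $\binom{2n-5}{n-2}+2$, and Vlachos' extra saving comes from a separate, more delicate treatment of the extremal colour classes; but that treatment gives up a $\binom{2n-10}{n-3}$ term. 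The new point --- and the step I expect to be the main obstacle --- is that in precisely the configuration responsible for this term one can already exhibit $n$ points in convex position directly, by splicing the relevant cap and cup through a second extreme point of $X$. The delicate part is verifying that the two chains one glues together overlap only at their endpoints, so that their union is genuinely in convex position; this is a geometric case analysis on the relative positions of a few cups, caps, and the two reference points.

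Finally, assembling the improved estimates over all colour classes gives a sum of binomial coefficients which, using Pascal's rule, the hockey-stick identity, and the symmetry $\binom{2n-5}{n-2}=\binom{2n-5}{n-3}$, collapses to $\binom{2n-5}{n-2}-\binom{2n-8}{n-3}+1$; thus a set avoiding a convex $n$-gon has at most this many points, which gives the theorem. The hypothesis $n\ge 6$ is what keeps every binomial coefficient in the range where the argument is valid, and the handful of small cases can be checked directly. The combinatorial identity at the end is routine bookkeeping; the real work is the geometric case analysis in the critical step.
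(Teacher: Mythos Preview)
Your proposal is too schematic to count as a proof plan, and where it is specific it does not match the mechanism that actually produces the bound. The paper does \emph{not} colour points by the longest cup and cap ending there; its recursion runs through the upper/lower partition of Definition~\ref{UL} and Lemma~\ref{ext}. The central auxiliary object is not ``largest set with no convex $n$-gon, no $m$-cup, no $\ell$-cap'' but the convexification function $h(m,\ell)$ of Definition~\ref{goodf}, built on the notion of an $(m,\ell)$-convexifying point (Definition~\ref{good}); subadditivity of $h$ (Lemma~\ref{subadd}) is what drives the induction. You never mention this device, and without it there is no clear route from your description to the claimed recursion.

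The specific improvement over Vlachos is also misidentified. The term $\binom{2n-10}{n-3}$ in \eqref{eq1} disappears not because one ``splices the relevant cap and cup through a second extreme point'' in some critical configuration, but because the paper supplies the missing symmetric boundary estimate $h(4,\ell)\le\binom{\ell-1}{2}+1$ (Theorem~\ref{4,l}) to accompany Vlachos' $h(m,4)\le\binom{m-1}{2}+1$ (Theorem~\ref{m,4}); with both boundaries in hand the double induction of Theorem~\ref{boundgood} gives $h(m,\ell)\le\binom{m+\ell-4}{\ell-2}-\binom{m+\ell-6}{\ell-3}$ exactly. Finally, the T\'oth--Valtr step is not the cup-cap splicing you describe but the projective transformation of Theorem~\ref{PT}, which reduces the problem to finding an $(n-1)$-cap or a convex $n$-gon; Theorem~\ref{f+g} then bounds the size of sets avoiding both by $f(n-1,n-1)+g(n,n-2)+1=\binom{2n-5}{n-2}-\binom{2n-8}{n-3}+1$, and adding $1$ for the transformation gives the theorem. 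Your outline contains none of these three ingredients in recognisable form.
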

Note that this result is slightly stronger than that in \cite{NY}, but it yields the same asymptotic upper bound \eqref{eq3}.

In Section 2 of the paper, a partitioning of any point set, with some nice applications is introduced. We use this partitioning to give an alternative proof of the first upper bound for $ES(n)$, which was obtained by Erd\H{o}s and Szekeres \cite{ES35}. In Section 3, we introduce an auxiliary function called \emph{the convexification function} $h(m,l)$, and establish some of its most important properties. Among them, Lemma \ref{subadd} and Theorem \ref{m,4} are taken from \cite{V}. We prove another property of $h(m,l)$ in Theorem \ref{4,l}. Combining this result with the other two will give us an improved upper bound on the convexification function. By using this bound and applying a projective transformation taken from \cite{TV98}, we prove Theorem \ref{main} in the last section, and deduce the asymptotic bound \eqref{eq3} from it.


\section{Preliminary results}

First, we define a few properties of the point sets in the plane.
\begin{definition} \label{gpv}  A set is said to be \emph{in general position}, if it does not contain any three collinear points. 
\\ A set is \emph{non-vertical}, if any vertical line in the plane contains at most one point of the set. In other words, no line spanned by the points of the set is vertical.
\end{definition}
Unless otherwise stated, all sets we consider in this paper (even when considering the union of two sets) are assumed to be \emph{finite}, \emph{non-vertical} and \emph{in general position}. 

\begin{definition} [Cups and Caps]
Let $m \ge 3$ be a natural number. We say that the points $(x_1,y_1),(x_2,y_2),\dots,(x_m,y_m)$ form an \emph{$m$-cup} if they satisfy the following properties:
\begin{itemize}
\item[1.] $x_1 < x_2 < \dots < x_m$;
\item[2.] $\frac {y_2-y_1} {x_2-x_1} < \frac {y_3-y_2} {x_3-x_2} < \dots < \frac {y_m-y_{m-1}} {x_m-x_{m-1}}$.
\end{itemize} 
In a similar way, they form an \emph{$m$-cap} if we have
\begin{itemize}
\item[1.] $x_1 < x_2 < \dots < x_m$;
\item[2.] $\frac {y_2-y_1} {x_2-x_1} > \frac {y_3-y_2} {x_3-x_2} > \dots > \frac {y_m-y_{m-1}} {x_m-x_{m-1}}$.
\end{itemize} 
\end{definition}
\begin{figure} [ht]
\begin{center}
\epsfbox{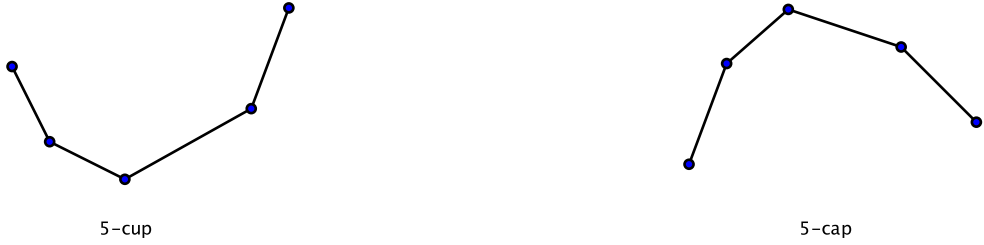}
\end{center}
\caption{A $5$-cup and a $5$-cap.}
\label{cup-cap}
\end{figure}

The following definition introduces a partitioning of any point set in the plane with some nice applications (Lemma \ref{ext}). For any point $p$, let $(p_x,p_y)$ denote its coordinates.
\begin{definition}[Upper and Lower subsets] \label{UL}
Consider the point set $S$ in the plane. Pick an arbitrary $s=(s_x,s_y) \in S$ and let 
$$S_s^-=\{s' \in S : s'_x < s_x\}, S_s^+=S\setminus (S_s^-\cup\{s\}).$$ 
For any $s' \in S\setminus\{s\}$, define the angle between $s$ and $s'$, $\angle(s,s')$, in the following way.
\begin{itemize}
\item[1.] If $s' \in S_s^-$: $\angle(s,s')=\angle(s',s,(s_x,s_y+1))$;
\item[2.] If $s' \in S_s^+$: $\angle(s,s')=\angle((s_x,s_y-1),s,s')$.
\end{itemize} 
Let $\angle(s,p_s)=\min\{\angle(s,s'): s' \in S\setminus\{s\}\}$. Note that since $S$ is in general position, $p_s$ is the unique point of $S\setminus\{s\}$ with that property. Now if $p_s \in S_s^+$, then we call $s$ an upper point for $S$; otherwise we call it a lower point. See Figure \ref{ULfig}. We denote the \emph{upper and lower subsets} of $S$ by $U_S$ and $L_S$, which consist of the upper and lower points of $S$, respectively. 

\end{definition}

\begin{figure} [h]
\begin{center}
\epsfbox{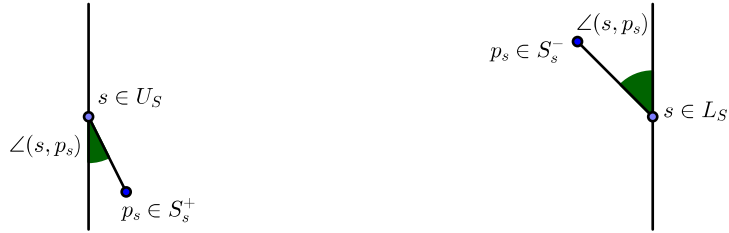}
\end{center}
\caption{Upper and lower points.}
\label{ULfig}
\end{figure}

\begin{lemma} \label{part}
For any point set $S$, $\{U_S,L_S\}$ gives a nontrivial partition of $S$.
\end{lemma}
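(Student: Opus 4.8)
The plan is to check the two requirements of the statement separately: that $\{U_S,L_S\}$ is a partition, and that it is nontrivial, i.e.\ that both blocks are nonempty. (Here we tacitly assume $|S|\ge 2$, which is needed even for the word "nontrivial" to make sense.)

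The partition property is immediate from Definition \ref{UL}. For each $s\in S$ the sets $S_s^-$ and $S_s^+$ are well defined, and the angle function $s'\mapsto\angle(s,s')$ is defined on the finite nonempty set $S\setminus\{s\}$; hence it attains its minimum, and by general position the minimizer $p_s$ is unique (as already noted in the definition). The point $s$ is then placed in $U_S$ if $p_s\in S_s^+$ and in $L_S$ otherwise. Since these two cases are exhaustive and mutually exclusive, $U_S\cap L_S=\emptyset$ and $U_S\cup L_S=S$.

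For nontriviality I would simply produce a witness in each block using the two extreme points in the $x$-direction. Since $S$ is non-vertical, no two of its points share an $x$-coordinate, so $S$ has a unique leftmost point $s^-$ and a unique rightmost point $s^+$, and $s^-\ne s^+$. For $s^-$ we have $S_{s^-}^-=\emptyset$, hence $S_{s^-}^+=S\setminus\{s^-\}\ne\emptyset$, so the minimizer $p_{s^-}$ must lie in $S_{s^-}^+$ and therefore $s^-\in U_S$. Symmetrically, $S_{s^+}^+=\emptyset$ forces $p_{s^+}\in S_{s^+}^-=S\setminus\{s^+\}$, so $s^+\in L_S$. Thus neither block is empty, which together with the previous paragraph proves the lemma.

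I do not anticipate any genuine obstacle. The only points needing a little care are (i) that $p_s$ is well defined — existence from finiteness of $S\setminus\{s\}$, uniqueness from general position — and (ii) that the non-vertical hypothesis is what guarantees a unique leftmost and a unique rightmost point, so that each of these really does contribute a member to the appropriate block.
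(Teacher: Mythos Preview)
Your proof is correct, but it takes a different route from the paper's. You exhibit explicit witnesses in each block by using the leftmost point of $S$ (necessarily in $U_S$) and the rightmost point (necessarily in $L_S$), which is clean and direct. The paper instead argues that for any $s\in U_S$ the associated minimizer $p_s$ must lie in $L_S$: if $p_s$ were itself upper, then $p_{p_s}$ would make a strictly smaller angle with $s$ than $p_s$ does, contradicting the minimality of $\angle(s,p_s)$. Your argument is arguably simpler, but the paper's version establishes the stronger auxiliary fact ``$s\in U_S \Rightarrow p_s\in L_S$'' (and symmetrically), which is exactly what is invoked later in the proof of Lemma~\ref{ext} to extend caps and cups. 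So if you adopt your proof here, you would need to supply that implication separately when you reach Lemma~\ref{ext}.
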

\begin{proof}
According to the Definition \ref{UL}, it is easy to see that $S=U_S \cup L_S$ and $U_S\cap L_S=\emptyset$. So we know that at least one of $U_S$ and $L_S$ is nonempty. Without loss of generality, suppose there exists an $s \in U_S$. Consider the point $p_s \in S\setminus\{s\}$ forming the minimum angle with $s$. It is easy to check that $p_s \in L_S$, since otherwise we get $\angle(s,p_{p_s}) < \angle(s,p_s)$, which is in contradiction with the choice of $p_s$. Therefore, we also have $L_S\neq \emptyset$.
\end{proof}

\begin{lemma} \label{ext}
Consider the disjoint point sets $B, S$ and let $m,l \ge 3$ be natural numbers. The following statements hold.
\begin{itemize}
\item[1.] Any $l$-cap in $B\cup S$ with the rightmost point in $U_S$ and the second rightmost point in $S$ can be extended to an $(l+1)$-cap, by adding an appropriate point of $L_S$ to its right.
\item[2.] Any $m$-cup in $B\cup S$ with the leftmost point in $L_S$ and the second leftmost point in $S$ can be extended to an $(m+1)$-cup, by adding an appropriate point of $U_S$ to its left.
\end{itemize}
\end{lemma}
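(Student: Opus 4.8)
The plan is to derive both parts from one elementary observation about the angular order around a point, so I would begin with that. Fix a point $s$ of our (non-vertical, general-position) set and a point $t \neq s$; I claim that $\angle(s,t)$ determines the slope of the line $st$ through a fixed strictly increasing function of $\angle(s,t) \in (0,\pi)$. If $t \in S_s^-$, then $\angle(s,t)$ is the angle at $s$ between the upward vertical ray and the ray toward $t$; writing that ray as $(-\sin\theta,\cos\theta)$ with $\theta = \angle(s,t) \in (0,\pi)$, its slope is $-\cot\theta$, which is strictly increasing in $\theta$ and sweeps all of $\R$ as $t$ ranges over the left half-plane of $s$. If $t \in S_s^+$, the angle is instead measured from the downward vertical ray, but the very same formula $\mathrm{slope}(st) = -\cot\angle(s,t)$ results, because the downward ray and a rightward ray of a given slope are exactly the opposites of the upward ray and a leftward ray of that same slope. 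Hence for any $t_1, t_2 \neq s$ we have $\angle(s,t_1) < \angle(s,t_2)$ if and only if $\mathrm{slope}(st_1) < \mathrm{slope}(st_2)$.

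For part 1, write the $l$-cap as $q_1,\dots,q_l$ with $(q_1)_x < \dots < (q_l)_x$, so that $q_l \in U_S$ and $q_{l-1} \in S$ by hypothesis, and set $r := p_{q_l}$. Because $q_l$ is an upper point, $r \in S_{q_l}^+$, i.e.\ $r$ lies strictly to the right of $q_l$; and because $q_l \in U_S$, the argument in the proof of Lemma \ref{part} gives $r \in L_S$. Since $q_{l-1}$ is a point of $S \setminus \{q_l\}$ lying on the opposite (left) side of $q_l$ from $r$, it is distinct from $r$, so the minimality (and uniqueness) of the angle-minimizer $p_{q_l}$ yields $\angle(q_l, r) < \angle(q_l, q_{l-1})$. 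By the angle–slope correspondence this becomes $\mathrm{slope}(q_l r) < \mathrm{slope}(q_l q_{l-1}) = \mathrm{slope}(q_{l-1} q_l)$. Finally $r$ differs from every $q_i$ (those with $i < l$ lie left of $q_l$) and has $(r)_x > (q_l)_x$, so $q_1,\dots,q_l,r$ has strictly increasing abscissae, and its last consecutive slope is strictly smaller than the previous one; as $q_1,\dots,q_l$ was already a cap, $q_1,\dots,q_l,r$ is an $(l+1)$-cap.

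Part 2 is the mirror image of part 1 under the point reflection $(x,y) \mapsto (-x,-y)$: this map sends $S$ to another admissible set, interchanges $S_s^-$ with $S_s^+$ (hence $U_S$ with $L_S$ and "leftmost" with "rightmost"), preserves slopes while reversing the left-to-right order (hence interchanges cups with caps), and turns "extend to the left" into "extend to the right". Alternatively one runs the symmetric argument directly: given the $m$-cup $q_1,\dots,q_m$ with $q_1 \in L_S$ and $q_2 \in S$, set $r := p_{q_1}$, observe $r \in S_{q_1}^-$ and $r \in U_S$, deduce $\angle(q_1,r) < \angle(q_1,q_2)$ and hence $\mathrm{slope}(r q_1) < \mathrm{slope}(q_1 q_2)$, and conclude that $r,q_1,\dots,q_m$ is an $(m+1)$-cup.

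The only step carrying real content is the angle–slope correspondence, and within it the point that the increasing function relating $\angle(s,t)$ to $\mathrm{slope}(st)$ is the \emph{same} regardless of which side of $s$ the point $t$ lies on — which is precisely what the asymmetric (upward for left points, downward for right points) reference directions in Definition \ref{UL} are designed to achieve. Everything afterwards is bookkeeping, using the minimality defining $p_s$ together with the implications $s \in U_S \Rightarrow p_s \in L_S$ and $s \in L_S \Rightarrow p_s \in U_S$ established in the proof of Lemma \ref{part}; general position is invoked only to ensure slopes of distinct lines are distinct, so that the relevant inequalities are strict.
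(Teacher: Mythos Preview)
Your proof is correct and follows essentially the same approach as the paper's: for part~1 you use the point $p_{q_l}$ as the extender, invoke Lemma~\ref{part} to place it in $L_S$, and use the angular minimality of $p_{q_l}$ to obtain the required slope inequality; part~2 is handled by symmetry. The paper phrases the key step geometrically (``all the points of $S$ lying to the left of $v_l$ are below the line $v_lp_{v_l}$'') whereas you make the angle--slope correspondence $\angle(s,t)\mapsto -\cot\angle(s,t)$ explicit, but this is the same argument at a finer level of detail rather than a different route.
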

\begin{proof} We give the sketch of the proof for the first statement; the second one can be proved in a similar way. Denote the $l$-cap by $v_1v_2\dots v_l$, where $v_{l-1} \in S$ and $v_l \in U_S$. As described in the Definition \ref{UL}, consider the point $p_{v_l} \in S\setminus\{v_l\}$ forming the minimum angle with $v_l$. By the proof of Lemma \ref{part}, we have $p_{v_l} \in L_S$. All the points of $S$ lying to the left of $v_l$ are below the line $v_lp_{v_l}$; in particular $v_{l-1}$ is below  $v_lp_{v_l}$. See Figure \ref{lemmaext}. So we get that $v_1v_2\dots v_{l-1}v_lp_{v_l}$ forms an $(l+1)$-cap.  
\begin{figure} [h]
\begin{center}
\epsfbox{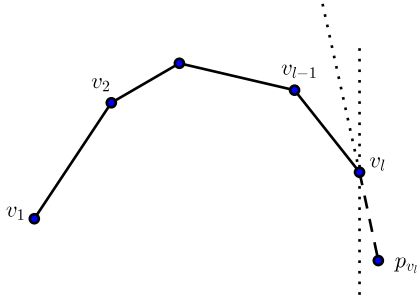}
\end{center}
\caption{Extension of $l$-cap $v_1v_2\dots v_l$ by $p_{v_l}$.}
\label{lemmaext}
\end{figure}

\end{proof}

Now we state the first upper bound which was obtained by Erd\H{o}s and Szekeres \cite{ES35}. First note that in the definition of the function $ES(n)$ given in Theorem \ref{main}, we can simply assume that the set is non-vertical, since by applying an appropriate rotation, one can make the set non-vertical, by also preserving the convex subsets. 

\begin{definition} [$(m,l)$-Free set]
For the natural numbers $m,l \ge 3$, we call a set \emph{$(m,l)$-free} if it contains no $m$-cup and no $l$-cap.
\end{definition}

For the following theorem, we give a modification of the original proof, by using the upper and lower subsets.
\begin{theorem} [Erd\H{o}s and Szekeres \cite{ES35}] \label{ES35} \label{free}
Consider the natural numbers $m,l$ and let $f(m,l)$ denote the maximum natural number such that there exists an $(m,l)$-free set of $f(m,l)$ points in the plane. Then we have $f(m,l)={m+l-4 \choose l-2}$.
\end{theorem}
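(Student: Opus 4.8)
The plan is to establish the two bounds $f(m,l) \le \binom{m+l-4}{l-2}$ and $f(m,l) \ge \binom{m+l-4}{l-2}$ separately, by induction on $m+l$. For the lower bound, one gives an explicit construction of an $(m,l)$-free set of the required size; for the upper bound, one shows that any set of $\binom{m+l-4}{l-2}+1$ points contains an $m$-cup or an $l$-cap, which is where Lemma~\ref{ext} and the partition $\{U_S,L_S\}$ come in.

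First I would dispose of the base cases: when $m=3$, an $(3,l)$-free set has no $3$-cup, so its points must form a single cap, hence at most $l-1$ of them, matching $\binom{l-1}{l-2}=l-1$; symmetrically for $l=3$. Now for the inductive step of the upper bound, suppose $|S| \ge \binom{m+l-4}{l-2}+1$ and, for contradiction, that $S$ is $(m,l)$-free. Here is where I would use the new machinery: apply Lemma~\ref{part} to split $S$ into the nonempty sets $U_S$ and $L_S$. The key observation is that $L_S$ itself is $(m-1,l)$-free: if $L_S$ contained an $(m-1)$-cup, its leftmost point lies in $L_S$ and — since $L_S$ is the whole ground set here, the second leftmost point also lies in $L_S \subseteq S$ — so by part~2 of Lemma~\ref{ext} (with $B=\emptyset$, $S$ playing its own role restricted appropriately) we could extend it to an $m$-cup in $S$, a contradiction. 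Symmetrically, $U_S$ is $(m,l-1)$-free. By the induction hypothesis, $|L_S| \le \binom{m+l-5}{l-2}$ and $|U_S| \le \binom{m+l-5}{l-3}$, so $|S| \le \binom{m+l-5}{l-2}+\binom{m+l-5}{l-3} = \binom{m+l-4}{l-2}$ by Pascal's identity, contradicting the size assumption.

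For the lower bound I would build the extremal configuration recursively, mirroring the split above: take an $(m-1,l)$-free set $A$ of size $\binom{m+l-5}{l-2}$ and an $(m,l-1)$-free set $B$ of size $\binom{m+l-5}{l-3}$ (which exist by induction), scale and translate so that every point of $A$ lies far to the left and far below every point of $B$, and moreover so that every line through two points of $A$ has larger slope than every line through two points of $B$ — this can be arranged by placing $A$ in a tiny, very steep cluster and $B$ in a tiny, very flat cluster far to the upper right. One then checks that any cup in the union meets $A$ in at most $m-1$ points and $B$ in at most $1$ point (a cup using $\ge 2$ points of $B$ after points of $A$ would need increasing slopes, impossible by the slope condition), so it has length $\le m-1$; dually for caps. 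This gives an $(m,l)$-free set of size $\binom{m+l-5}{l-2}+\binom{m+l-5}{l-3} = \binom{m+l-4}{l-2}$.

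The main obstacle, and the only place requiring genuine care, is verifying in the upper-bound step that the "second leftmost point" hypothesis of Lemma~\ref{ext} is actually met — i.e.\ that an $(m-1)$-cup inside $L_S$ has at least its two leftmost points in $S$, which is automatic here since the cup lives in $S$, but one must be careful about the roles of $B$ and $S$ in Lemma~\ref{ext} when it is later invoked in the genuinely two-set setting. For this self-contained theorem, however, taking $B=\emptyset$ makes everything transparent, and the geometric slope bookkeeping in the lower-bound construction is the standard Erd\H{o}s--Szekeres argument, so I do not anticipate real difficulty there.
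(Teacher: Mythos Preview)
Your upper-bound argument is essentially identical to the paper's: both partition $S$ into $U_S$ and $L_S$, use Lemma~\ref{ext} to see that $U_S$ is $(m,l-1)$-free and $L_S$ is $(m-1,l)$-free, and conclude via Pascal's identity---the paper just phrases it as the recursion $f(m+1,l+1)\le f(m+1,l)+f(m,l+1)$ rather than the contrapositive. Note that the paper explicitly omits the lower bound (``We only prove $f(m,l)\le\binom{m+l-4}{l-2}$, since this is the part that we need''), so your construction sketch is extra; it is the standard one, though your stated slope condition (only $A$--$A$ slopes exceeding $B$--$B$ slopes) is not quite enough by itself---you also need the cross $A$--$B$ slopes to sit in the right position relative to both, which your verbal placement achieves but should be stated precisely.
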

\begin{proof}
We only prove $f(m,l) \le {m+l-4 \choose l-2}$, since this is the part that we need for the rest of the paper. For this, it is enough to prove 
$$f(m+1,l+1) \le f(m+1,l)+f(m,l+1),$$
since if it holds for all $m,l$, one can apply induction to deduce $f(m,l) \le {m+l-4 \choose l-2}$. 

To prove this, we consider a set $S$ with more than $f(m+1,l)+f(m,l+1)$ points in the plane and prove that $S$ must contain either an $(m+1)$-cup or an $(l+1)$-cap. Let $U_S, L_S$ be the associated upper and lower subsets of $S$, respectively. By Lemma \ref{part}, we have
$$|S|=|U_S|+|L_S| > f(m+1,l)+f(m,l+1),$$
so we either have $|U_S| > f(m+1,l)$ or $|L_S| > f(m,l+1)$. Without loss of generality, assume that $|U_S| > f(m+1,l)$. By the definition, $U_S$ contains either an $(m+1)$-cup or an $l$-cap. In the former case, we are done. In the latter one, by Lemma \ref{ext}, we obtain an $(l+1)$-cap in $S$. So we are done in both cases and the theorem is proved.
\end{proof}
\begin{corollary} \label{corES35}
We have $ES(n) \le {2n-4 \choose n-2}+1$.
\end{corollary}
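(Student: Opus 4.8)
The plan is to derive the corollary directly from Theorem \ref{free} by specializing $m=l=n$. Given any set of $N$ points in the plane in general position with $N=\binom{2n-4}{n-2}+1$, we first invoke the reduction noted just before Definition \ref{free}: by applying a suitable rotation we may assume the set is non-vertical, and since rotations preserve both general position and the property of a subset being in convex position, it suffices to find $n$ points in convex position in the rotated (non-vertical, general position) set.

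Next, I would apply Theorem \ref{free} with $m=l=n$, which gives $f(n,n)=\binom{2n-4}{n-2}$. Since our set has $N=\binom{2n-4}{n-2}+1>f(n,n)$ points, by the definition of $f$ the set cannot be $(n,n)$-free; hence it contains either an $n$-cup or an $n$-cap.

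Finally, I would observe that any $m$-cup (respectively $m$-cap) is a set of $m$ points in convex position: the defining monotonicity of consecutive slopes, together with $x_1<\dots<x_m$, forces the points to be vertices of a convex polygon (they all lie on the boundary of their convex hull, since no point lies inside the hull of the others). Applying this with $m=n$, we conclude the set contains $n$ points in convex position, so $ES(n)\le\binom{2n-4}{n-2}+1$.

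There is essentially no hard step here — the work was already done in Theorem \ref{free}. The only points requiring a line of justification are the harmless non-vertical reduction and the elementary fact that cups and caps are in convex position; neither presents a genuine obstacle.
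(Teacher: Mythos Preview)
Your argument is correct and matches the paper's own proof essentially verbatim: the paper simply notes that $n$-cups and $n$-caps are convex $n$-gons, so $ES(n)\le f(n,n)+1$, and then applies Theorem~\ref{ES35} with $m=l=n$. Your additional remarks on the non-vertical reduction and on why cups/caps are in convex position just spell out details the paper leaves implicit.
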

\begin{proof} Since $n$-cups and $n$-caps are convex $n$-gons, we have $ES(n) \le f(n,n)+1$. So the result is deduced by Theorem \ref{ES35}.
\end{proof}

Now we give a short motivation for the rest of the paper. Consider the point set $S$ with $s \in S$ satisfying the following properties: 
\begin{itemize}
\item [1.] $s$ is the right endpoint of an $(n-2)$-cup in $S$;
\item [2.] $s$ is the left endpoint of an $(n-2)$-cap in $S$;
\item [3.] $s$ is the left endpoint of an $(n-1)$-cup in $S$;
\item [4.] The right endpoint of the $(n-1)$-cup does not coincide with the second point, from the left, of the $(n-2)$-cap. 
\end{itemize}
Then it can be proven that $S$ contains a convex $n$-gon or an $(n-1)$-cap. \\
\\
So a set that contains no convex $n$-gon and no $(n-1)$-cap does not contain any point with all the above 4 properties. Using this fact, for $(n,n-1)$-free sets that contain no convex $n$-gon, we can improve the upper bound on $ES(n)$ which Corollary \ref{corES35} gives. \\
As a result, for any given positive integer $n$, we show the following: Any large enough set of points contains either a convex $n$-gon, or an $(n-1)$-cap or a point $s$ with the above 4 properties. We construct such a point $s$ inductively for sets of large size. So the cardinality of any $(n,n-1)$-free set that contains no convex $n$-gon must be small enough that our inductive procedure does not produce such a point $s$. Therefore, we bound the size of all $(n,n-1)$-free sets that contain no convex $n$-gon. Next, we combine this bound with a result of T\'{o}th and Valtr \cite{TV98} to get the improved bound for $ES(n)$.


\section{Properties of the convexification function}

The aim of this section is to obtain an upper bound on the convexification function (Definition \ref{goodf}). For this, we first prove some of its most important properties. We start with the definitions of convexifying point and convexification function.

\begin{definition} [$(m,l)$-Convexifying point] \label{good}
 Let $m, l \ge 3$ be natural numbers. Consider a set of points $S$ and a point $s \in S$ which is the leftmost point of an $(l-1)$-cap in $S$. We call $s$ an \emph{$(m, l)$-convexifying point} for $S$, if for arbitrary $n \ge 4$, the following holds. For any set $B$ with $B \cap S=\emptyset$, if $B \cup S$ contains an $(n-1)$-cup whose left endpoint is s and whose right endpoint is in $B$, then $B \cup S$ must contain at least one of the followings:
\begin{itemize}
\item[1.] An $m$-cup whose two leftmost points belong to $S$;
\item[2.] An $l$-cap whose two rightmost points belong to $S$;
\item[3.] A convex $n$-gon. 
\end{itemize}

\end{definition} 

\begin{definition} [Convexification function $h(m,l)$] \label{goodf}
For the natural numbers $m,l \ge 3$, define $h(m,l)$ to be the maximum number such that there exists an $(m,l)$-free set of $h(m,l)$ points in the plane with no $(m,l)$-convexifying points. The function $h(m,l)$ is called the \emph{convexification function}.
\end{definition}

\begin{lemma} [Subadditivity] \label{subadd}
The convexification function is subadditive. In other words, for any natural numbers $m,l \ge 3$, we have
$$h(m+1,l+1) \le h(m+1,l)+h(m,l+1).$$
\end{lemma}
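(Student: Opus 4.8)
The plan is to mimic the proof of Theorem \ref{ES35} (the Erdős–Szekeres recursion), but carried out for $(m,l)$-free sets with no convexifying point, while carefully tracking the convexifying-point condition across the upper/lower split. Let $S$ be a set with $|S| > h(m+1,l) + h(m,l+1)$; I want to show that $S$ is either not $(m+1,l+1)$-free, or it contains an $(m+1,l+1)$-convexifying point. Split $S = U_S \cup L_S$ via Definition \ref{UL}; by Lemma \ref{part} this is a nontrivial partition, so $|U_S| + |L_S| > h(m+1,l) + h(m,l+1)$ and hence $|U_S| > h(m+1,l)$ or $|L_S| > h(m,l+1)$.

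In the first case, $|U_S| > h(m+1,l)$, so by Definition \ref{goodf} the set $U_S$ either fails to be $(m+1,l)$-free or contains an $(m+1,l)$-convexifying point $s$. If $U_S$ contains an $(m+1)$-cup we are done (so is $S$); if it contains an $l$-cap, then since $U_S \subseteq S$ and the rightmost point of that cap lies in $U_S$ — here I would want the rightmost and second-rightmost to be set up so Lemma \ref{ext}(1) applies — I extend it by a point of $L_S$ to get an $(l+1)$-cap in $S$, again done. The interesting subcase is when $U_S$ is $(m+1,l)$-free but has an $(m+1,l)$-convexifying point $s$, i.e.\ $s$ is the leftmost point of an $(l-1)$-cap in $U_S$. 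I then claim $s$ is an $(m+1,l+1)$-convexifying point for $S$: given any disjoint $B$ with an $(n-1)$-cup in $B \cup S$ from $s$ to a point of $B$, I feed the pair $(B \cup L_S,\ U_S)$ into the convexifying property of $s$ in $U_S$ — this requires checking that the $(n-1)$-cup's right endpoint lies in the "$B$" of that application, which it does since it is in $B \subseteq B \cup L_S$. This yields, inside $B \cup L_S \cup U_S = B \cup S$, either an $(m+1)$-cup whose two leftmost points are in $U_S \subseteq S$ (good: option 1 for $S$), or a convex $n$-gon (option 3), or an $l$-cap whose two rightmost points are in $U_S$; in the last situation, Lemma \ref{ext}(1) extends it by a point of $L_S$ to an $(l+1)$-cap whose two rightmost points lie in $L_S \cup U_S = S$ — this is exactly option 2 of Definition \ref{good} for the pair $(m+1,l+1)$, so $s$ is an $(m+1,l+1)$-convexifying point for $S$.

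In the second case, $|L_S| > h(m,l+1)$, and the argument is the symmetric mirror: $L_S$ either fails to be $(m,l+1)$-free — giving an $(l+1)$-cap in $S$ directly, or an $m$-cup whose two leftmost points are in $L_S$, which Lemma \ref{ext}(2) extends on the left by a point of $U_S$ to an $(m+1)$-cup in $S$ — or $L_S$ has an $(m,l+1)$-convexifying point; but here I must be a little careful, because Definition \ref{good} is asymmetric (a convexifying point is defined via cups on its \emph{left} and caps on its \emph{right}), so the clean inheritance runs through option 1 (the $m$-cup with two leftmost points in $L_S$) being extended on the left into $S$, rather than through a simple relabeling. I expect this is why the statement is phrased as subadditivity in exactly this direction, and checking that the convexifying-point bookkeeping goes through in this second case — in particular, that the hypothesis "$s$ is the leftmost point of an $l$-cap in $S$" needed for $s$ to be $(m+1,l+1)$-convexifying still holds — is the main obstacle; everything else is a transcription of the Erdős–Szekeres recursion.
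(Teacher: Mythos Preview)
Your proposal is correct and follows the paper's approach: split $S = U_S \cup L_S$, apply the definition of $h$ to whichever part is large, and promote the resulting convexifying point to one for $S$ by extending the relevant cap/cup output via Lemma~\ref{ext} (the paper uses $B' = (B \cup L_S)\cap C$ where you use $B \cup L_S$, but either works). Your worry about asymmetry in the second case is unfounded---the $(m,l+1)$-convexifying $s \in L_S$ is already the left endpoint of an $l$-cap in $L_S \subseteq S$, and the option-1 $m$-cup (two leftmost points in $L_S$) extends on the left via Lemma~\ref{ext}(2) exactly as the $l$-cap extended on the right in the first case; the paper simply records that the second case is ``exactly similar.''
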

\begin{proof}
Consider the $(m+1,l+1)$-free set $S$ containing more than $h(m+1,l) + h(m,l+1)$ points in the plane. We have to show that $S$ has an $(m+1,l+1)$-convexifying point. Let $U_S, L_S$ be the upper and lower subsets of $S$, respectively. Since by Lemma \ref{part} we have
$$|S|=|U_S|+|L_S| > h(m+1,l) + h(m,l+1),$$
we get either $|U_S| > h(m + 1, l)$ or $|L_S| > h(m, l+ 1)$. 

Consider the former case. Note that by Lemma \ref{ext}, $U_S$ is an $(m+1,l)$-free set, since $S$ is $(m+1,l+1)$-free. So $U_S$ has an $(m+1,l)$-convexifying point. Denote this point by $s$. We show that $s$ is also an $(m+1,l+1)$-convexifying point for $S$.  For this, first note that $s$ is the leftmost point of an $(l-1)$-cap in $U_S$. This cap can be extended to an $l$-cap in $S$ from right, by Lemma \ref{ext}, so $s$ is the left endpoint of an $l$-cap in $S$. Now let $n \ge 4$ be an arbitrary natural number and consider the set $B$ with $B \cap S=\emptyset$, such that $B \cup S$ contains an $(n-1)$-cup whose left endpoint is $s$ and whose right endpoint is in $B$. Call this $(n-1)$-cup $C$ and construct the set $B'$ as 
$$B'= (B \cup L_S) \cap C=C \setminus U_S.$$
Clearly, $B' \cap U_S=\emptyset$, and $B' \cup U_S$ contains the $(n-1)$-cup $C$, whose left endpoint is s and whose right endpoint is in $B'$. So by the Definition \ref{good} and $(m+1,l)$-convexifying property of s for $U_S$, $B' \cup U_S$ must contain at least one of the following:
\begin{enumerate}
\item [1.] An $(m+1)$-cup whose two leftmost points belong to $U_S$;
\item [2.] An $l$-cap whose two rightmost points belong to $U_S$;
\item [3.] A convex $n$-gon.
\end{enumerate}

Since $B' \cup U_S \subset B \cup S$ and $U_S \subset S$, in the cases 1 and 3, we get a desired $(m+1)$-cup and a convex $n$-gon in $B \cup S$, respectively, which makes $s$ an $(m+1,l+1)$-convexifying point for $S$. Now consider case 2 and call this $l$-cap $C'$. Since two rightmost points of $C'$ belong to $U_S$, by Lemma \ref{ext}, $C'$ can be extended to an $(l+1)$-cap by adding an appropriate point of $L_S$ to its right. This way, we get an $(l+1)$-cap in $B \cup S$ whose two rightmost points belong to $S$, which makes $s$ an $(m+1,l+1)$-convexifying point for $S$.
So $s$ is a convexifying point for $S$ in all of these three cases and we are done. The proof of the second case, where $|L| > h(m, l+ 1)$, is exactly similar to this one. 
\end{proof}

\begin{theorem} \label{m,4}
For any natural number $m \ge 4$, we have $h(m,4) \le {m-1 \choose 2}+1$.
\end{theorem}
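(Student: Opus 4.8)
The plan is to prove the bound by induction on $m$. The base case $m=4$ asserts that every $(4,4)$-free set with at least $5$ points has a $(4,4)$-convexifying point; since $f(4,4)=6$, Theorem \ref{ES35} is not enough here, and one has to settle this by hand — a bounded case analysis of the possible cup/cap patterns on a few points, together with Lemma \ref{ext}. For the inductive step, assume the statement for $m-1$ and let $S$ be an $(m,4)$-free set with no $(m,4)$-convexifying point; the goal is to show $|S|\le\binom{m-1}{2}+1$. Take the partition $S=U_S\sqcup L_S$ from Definition \ref{UL}. By Lemma \ref{ext}(1), a $3$-cap inside $U_S$ would extend to a $4$-cap in $S$, so $U_S$ has no $3$-cap and hence is a single cup; as $S$ has no $m$-cup, $|U_S|\le m-1$. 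By Lemma \ref{ext}(2), an $(m-1)$-cup inside $L_S$ would extend to an $m$-cup in $S$, so $L_S$ is $(m-1,4)$-free.

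The next step is to show that $L_S$ has no $(m-1,4)$-convexifying point, so that the induction hypothesis gives $|L_S|\le h(m-1,4)\le\binom{m-2}{2}+1$. Suppose $s$ were such a point. Then $s$ is the leftmost point of a $3$-cap in $L_S\subseteq S$. If $B$ is disjoint from $S$ and $B\cup S$ contains an $(n-1)$-cup $C$ from $s$ with right endpoint in $B$, apply the convexifying property of $s$ in $L_S$ to the set $B'=(B\cup U_S)\cap C=C\setminus L_S$ (which is disjoint from $L_S$, and $B'\cup L_S$ still contains $C$, whose right endpoint lies in $B'$). This produces, inside $B'\cup L_S\subseteq B\cup S$, a convex $n$-gon, a $4$-cap with its two rightmost vertices in $L_S\subseteq S$, or an $(m-1)$-cup with its two leftmost vertices in $L_S$; in the last case Lemma \ref{ext}(2) extends it to an $m$-cup whose two leftmost vertices lie in $S$. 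In every case $s$ would be an $(m,4)$-convexifying point for $S$, a contradiction. So far this gives only $|S|\le(m-1)+\binom{m-2}{2}+1=\binom{m-1}{2}+2$, which is off by one.

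To remove this last unit it suffices to prove that $U_S$ and $L_S$ cannot both be extremal; concretely, that if $U_S$ is a full $(m-1)$-cup then $S$ has an $(m,4)$-convexifying point. The natural candidate is the leftmost point $u_1$ of $U_S$, which is the left endpoint of an $(m-1)$-cup in $S$; one first checks, using $u_1\in U_S$ and Lemma \ref{ext}, that $u_1$ is the leftmost point of a $3$-cap in $S$, and then, given any $B$ disjoint from $S$ and any $(n-1)$-cup $C=(u_1,c_2,\dots,c_{n-1})$ with $c_{n-1}\in B$, one compares the slope of $u_1c_2$ with the initial slopes of the $(m-1)$-cup $U_S$ and of that $3$-cap. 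According to the outcome of the comparison, either $C$ merges with a prefix of $U_S$ into a cup of length at least $n$ (a convex $n$-gon) or into an $m$-cup with both leftmost vertices in $S$, or one is forced into a configuration that contains a $4$-cap whose two rightmost vertices lie in $S$. Carrying out this slope analysis cleanly — choosing the right candidate point and showing that one of the three required configurations always appears — is the main obstacle, and the base case $m=4$ needs the same kind of finite analysis. Granting it, $|S|=|U_S|+|L_S|\le(m-2)+\binom{m-2}{2}+1=\binom{m-1}{2}+1$, which completes the induction.
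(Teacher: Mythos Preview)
Your inductive framework is sound up to the bound $|S|\le\binom{m-1}{2}+2$: the partition $S=U_S\sqcup L_S$, the bound $|U_S|\le m-1$, and the transfer of a hypothetical $(m-1,4)$-convexifying point of $L_S$ to an $(m,4)$-convexifying point of $S$ (which is precisely the mechanism behind Lemma~\ref{subadd}) all work as stated.

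The gap is in the endgame, and it is not a detail. First, your candidate $u_1$ is the wrong point. For $u_1$ to be convexifying it must be the leftmost vertex of a $3$-cap in $S$, and nothing in your hypotheses forces this; Lemma~\ref{ext} concerns caps \emph{ending} in $U_S$, not caps \emph{starting} there, and it is easy to imagine all points to the right of $u_1$ lying above every line through $u_1$ and the next point. The natural candidate is $u_{m-2}$, the second rightmost vertex of the cup $U_S$: since $u_{m-1}\in U_S$, the point $r=p_{u_{m-1}}\in L_S$ lies to the right of $u_{m-1}$ and (by $(m,4)$-freeness) below the line $u_{m-2}u_{m-1}$, so $u_{m-2}u_{m-1}r$ is automatically a $3$-cap with leftmost vertex $u_{m-2}$.

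Second, and more importantly, once one has an $(m-1)$-cup together with a point to its right, showing that $u_{m-2}$ is $(m,4)$-convexifying requires a genuine case analysis on the position of $c_{n-1}$ relative to $u_{m-1}$ and the line $u_{m-2}u_{m-1}$, with further subcases. You call this ``the main obstacle'' and leave it (and the base case $m=4$) undone; but that case analysis \emph{is} the proof. The paper carries it out directly and dispenses with the induction entirely: it observes that if $|S|>\binom{m-1}{2}+1$, then deleting the rightmost point and applying $f(m-1,4)=\binom{m-1}{2}$ yields an $(m-1)$-cup with a point $r$ to its right, and then verifies by cases that $v_{m-2}$ is convexifying. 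Your inductive wrapper therefore adds structure without bypassing any of the real work, and still leaves that work to be done.
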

\begin{proof} The proof consists of two parts. First, we prove that for the $(m,4)$-free set $S$, if it contains an $(m-1)$-cup with a point lying to the right of it, then the second rightmost point of this cup is an $(m,4)$-convexifying point of $S$. Secondly, we prove that if $|S| > {m-1 \choose 2}+1$, then $S$ contains such a cup.

Consider the first part and assume $S$ contains the $(m-1)$-cup $v_1v_2\dots v_{m-1}$, which is ordered in increasing order of $x$-coordinate, and consider $r \in S$ lying to the right of $v_{m-1}$. Note that $r$ must lie below the line spanned by $v_{m-2}v_{m-1}$, otherwise we get an $m$-cup which is in contradiction with the $(m,4)$-free property of $S$. We show that $v_{m-2}$ is an $(m,4)$-convexifying point for $S$. For this, note that $v_{m-2}v_{m-1}r$ is a $3$-cap, so $v_{m-2}$ is the leftmost point of a $3$-cap in $S$. Now take an arbitrary natural number $n \ge 4$ and consider the set $B$ with $B \cap S=\emptyset$, such that $B \cup S$ contains the $(n-1)$-cup $v_{m-2}u_2u_3\dots u_{n-1}$, again ordered in increasing order of $x$-coordinate, where $u_{n-1} \in B$. So in particular $v_{m-1} \neq u_{n-1}$. We split the rest of the proof into the following cases:
\\
\item []\textbf{Case a:} \emph{$u_{n-1}$ is to the left of $v_{m-1}$ and above the line $v_{m-2}v_{m-1}$:}
(Note that in this case we have $v_{m-1} \notin \{u_2,u_3,\dots,u_{n-2}\}$.)
\begin{itemize}
\item [] \textbf{Subcase a1:} \emph{$u_{n-2}$ is above the line $v_{m-3}v_{m-2}$:}
\\ $v_1v_2\dots v_{m-2}u_{n-2}u_{n-1}$ forms an $m$-cup with two leftmost points in $S$. This holds since both $v_{m-3}v_{m-2}u_{n-2}$ and $v_{m-2}u_{n-2}u_{n-1}$ form $3$-cups.
\item[] \textbf{Subcase a2:} \emph{$u_{n-2}$ is below the line $v_{m-3}v_{m-2}$ and $v_{m-1}$ is above the line $u_{n-3}u_{n-2}$:} \\$v_{m-2}u_2u_3\dots u_{n-2}v_{m-1}u_{n-1}$ forms a convex $n$-gon. This holds since $v_{m-2}u_2u_3\dots u_{n-1}$ forms a convex $(n-1)$-gon and $v_{m-1}$ is to the right of $u_{n-1}$, lies below $v_{m-2}u_{n-1}$ and above $u_{n-3}u_{n-2}$.
\item[] \textbf{Subcase a3:} \emph{$u_{n-2}$ is below the line $v_{m-3}v_{m-2}$ and $v_{m-1}$ is below the line $u_{n-3}u_{n-2}$:} 
\\ $u_{n-3}u_{n-2}v_{m-1}r$ forms a $4$-cap with two rightmost points in $S$. This holds since $u_{n-2}$ is below $v_{m-3}v_{m-2}$, so because it is between $v_{m-2}$ and $v_{m-1}$, it is also below $v_{m-2}v_{m-1}$. On the other hand, $r$ is also below $v_{m-2}v_{m-1}$, by the assumption. So $u_{n-2}v_{m-1}r$ forms a $3$-cap. $u_{n-3}u_{n-2}v_{m-1}$ also forms a $3$-cap, as well.
\end{itemize}  
\begin{figure} [h!]
\begin{center}
\epsfbox{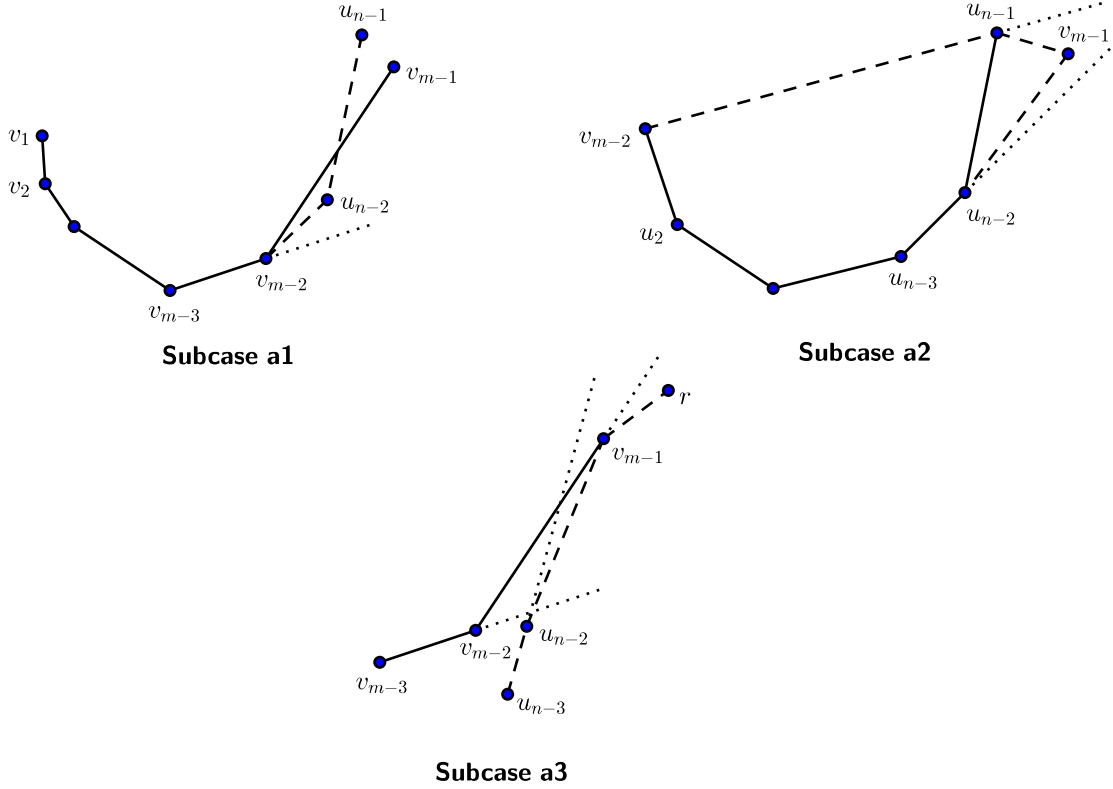}
\end{center}
\caption{Case a in Theorem \ref{m,4}.}
\label{case a}
\end{figure}
\item[] \textbf{Case b:} \emph{$u_{n-1}$ is to the left of $v_{m-1}$ and below the line $v_{m-2}v_{m-1}$}:
\begin{itemize}
\item[] \textbf{Subcase b1:} \emph{$v_{m-1}$ is above the line $u_{n-2}u_{n-1}$:} 
\\ $v_{m-2}u_2u_3\dots u_{n-1}v_{m-1}$ forms an $n$-cup, and as a result, a convex $n$-gon.
\item[] \textbf{Subcase b2:} \emph{$v_{m-1}$ is below the line $u_{n-2}u_{n-1}$:}
\\ $u_{n-2}u_{n-1}v_{m-1}r$ forms a $4$-cap with two rightmost points in $S$. This holds since both $u_{n-1}$ and $r$ are below $v_{m-2}v_{m-1}$, $u_{n-1}v_{m-1}r$ forms a $3$-cap. $u_{n-2}u_{n-1}v_{m-1}$ also forms a $3$-cap.
\end{itemize}  
\begin{figure} 
\begin{center}
\epsfbox{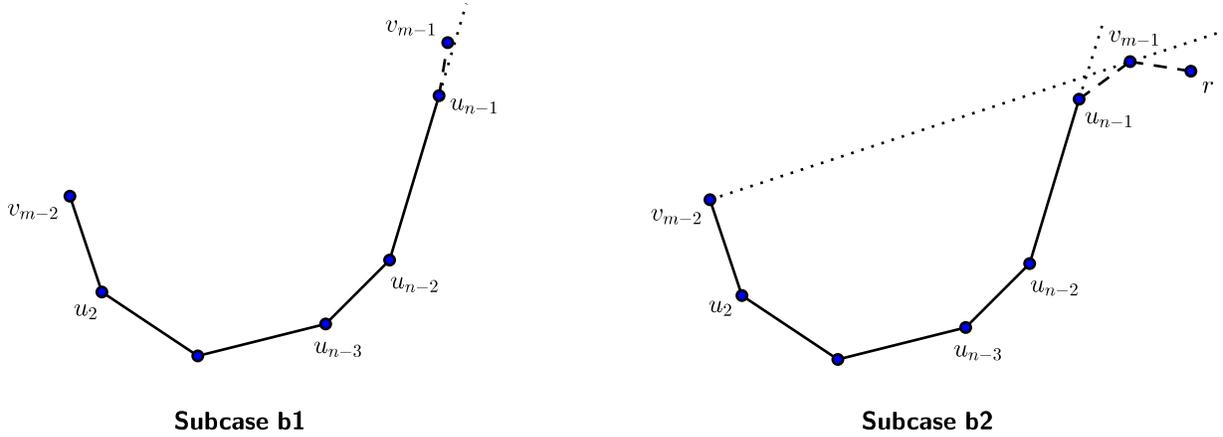}
\end{center}
\caption{Case b in Theorem \ref{m,4}.}
\label{case b}
\end{figure}
\item[] \textbf{Case c:} \emph{$u_{n-1}$ is to the right of $v_{m-1}$ and above the line $v_{m-2}v_{m-1}$:} 
\\ $v_1v_2\dots v_{m-2}v_{m-1}u_{n-1}$ forms an $m$-cup with two leftmost points in $S$.
\\
\item[] \textbf{Case d:} \emph{$u_{n-1}$ is to the right of $v_{m-1}$ and below the line $v_{m-2}v_{m-1}$:} 
\\ $v_{m-2}u_2u_3\dots u_{n-1}v_{m-1}$ forms a convex $n$-gon. This holds by following the same reasoning as subcase a2. Just note that $u_{n-1}$ is below the line $v_{m-2}v_{m-1}$. Also, all the points $u_2,u_3,\dots,u_{n-2}$ are below the line $v_{m-2}u_{n-1}$, so we have $v_{m-1} \notin \{u_2,u_3,\dots,u_{n-2}\}$.
\begin{figure} [h]
\begin{center}
\epsfbox{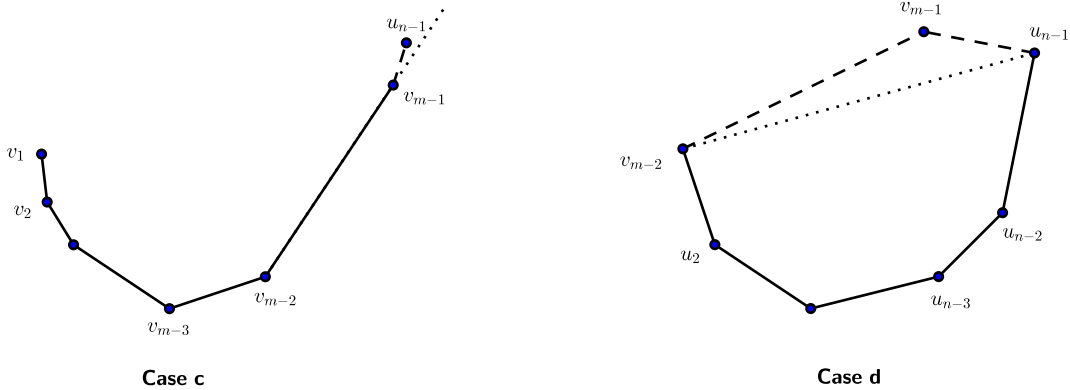}
\end{center}
\caption{Cases c, d in Theorem \ref{m,4}.}
\label{cases c,d}
\end{figure}
\\
So, according to the Definition \ref{good}, $v_{m-2}$ is an $(m,4)$-convexifying point for $S$. 

Now we prove the second part. Suppose $S$ is an $(m,4)$-free set with more than ${m-1 \choose 2}+1$ points in the plane. Let $r$ be the point of $S$ with the largest $x$-coordinate. Consider the set $S\setminus\{r\}$. We have 
$$|S\setminus\{r\}| > {m-1 \choose 2}=f(m-1,4),$$
so by the Theorem \ref{ES35}, $S\setminus\{r\}$ must contain either an $(m-1)$-cup or a $4$-cap. The latter cannot happen since $S$ is $(m,4)$-free, so $S\setminus\{r\}$ contains an $(m-1)$-cup. By the choice of $r$, it must lie to the right of the rightmost point of this cup, so we are done. This completes the proof of the theorem.
\end{proof}

The following theorem gives an upper bound for $h(4,l)$.

\begin{theorem} \label{4,l}
For any natural number $l \ge 4$, we have $h(4,l) \le {l-1 \choose 2}+1$. 
\end{theorem}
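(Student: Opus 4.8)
The plan is to mirror the two-part structure of the proof of Theorem~\ref{m,4}, but with the roles of cups/caps and of ``left''/``right'' interchanged in the way the convexifying point is produced. Part~1 is a lemma saying that a suitable configuration forces the existence of a $(4,l)$-convexifying point; Part~2 is an Erd\H{o}s--Szekeres-type counting argument showing that any $(4,l)$-free set with more than $\binom{l-1}{2}+1$ points contains that configuration.

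For Part~1 I would show: \emph{if $S$ is $(4,l)$-free and contains an $(l-1)$-cap $w_1w_2\cdots w_{l-1}$ (ordered by increasing $x$-coordinate) together with a point $q\in S$ lying to the left of $w_1$, then $w_1$ is a $(4,l)$-convexifying point for $S$.} By construction $w_1$ is the leftmost point of an $(l-1)$-cap, as Definition~\ref{good} demands; and since $S$ has no $l$-cap, $qw_1w_2\cdots w_{l-1}$ is not an $l$-cap, which forces the slope of $w_1w_2$ to exceed that of $qw_1$, i.e.\ $qw_1w_2$ is a $3$-cup. Now take any $B$ disjoint from $S$, any $n\ge 4$, and any $(n-1)$-cup $C=w_1u_2u_3\cdots u_{n-1}$ with $u_{n-1}\in B$, and run a case analysis paralleling the one in Theorem~\ref{m,4}. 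The clean case: if the slope of $w_1u_2$ exceeds the slope of $w_1w_2$, then the slopes of $qw_1$, $w_1u_2$, $u_2u_3$ are strictly increasing, so $qw_1u_2u_3$ is a $4$-cup whose two leftmost points $q,w_1$ lie in $S$ --- conclusion~1. Otherwise $C$ starts strictly below the line $w_1w_2$, and one splits according to whether $C$ stays below that line or crosses above it, using the point $w_2$ (appending it to $C$ when $C$ stays below) to try to manufacture either a convex $n$-gon or an $n$-cup, and --- when that fails --- peeling off a suitable tail $u_j\cdots u_{n-1}$ of $C$ and prepending it to the sub-cap $w_2w_3\cdots w_{l-1}$ to obtain an $l$-cap whose two rightmost points $w_{l-2},w_{l-1}$ lie in $S$ (conclusion~2).

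For Part~2: if $|S|>\binom{l-1}{2}+1$, let $q$ be the leftmost point of $S$. Then $S\setminus\{q\}$ has more than $\binom{l-1}{2}=f(4,l-1)$ points and no $4$-cup (as $S$ is $(4,l)$-free), so by Theorem~\ref{ES35} it contains an $(l-1)$-cap; since $q$ is the leftmost point of $S$ it lies to the left of that cap, so Part~1 applies and yields a $(4,l)$-convexifying point, giving $h(4,l)\le\binom{l-1}{2}+1$. As with Theorem~\ref{m,4}, the bulk of the work --- and the only delicate part --- is the case analysis in Part~1, specifically the cases producing conclusion~2, where one must glue a fragment of the externally given cup $C$ onto the tail $w_2\cdots w_{l-1}$ of the cap and verify the required chain of slope inequalities (the analog of subcases a3 and b2 of Theorem~\ref{m,4}) while carefully tracking the $x$-order of the points $w_i$, $u_j$, $q$. (An alternative route avoiding most of this is induction on $l$: the base case $l=4$ is Theorem~\ref{m,4} with $m=4$, the step uses Lemma~\ref{subadd} in the form $h(4,l)\le h(4,l-1)+h(3,l)$, and one separately checks the easy bound $h(3,l)\le l-2$, which again reduces to showing the leftmost point of an $(l-1)$-cap is a $(3,l)$-convexifying point, but now the set in question is a single cap and hence rigid.)
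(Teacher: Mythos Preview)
Your main two-part approach is exactly the paper's: the same convexifying point (the leftmost vertex $w_1$ of the $(l-1)$-cap), the same auxiliary point $q$ to its left, and the same Part~2 counting via $f(4,l-1)=\binom{l-1}{2}$. The paper organizes the Part~1 casework slightly differently --- it first disposes of the easy situations where some $u_i$ with $2\le i\le n-3$ lies in $S$ (then $w_1u_iu_{n-2}u_{n-1}$ is already a $4$-cup with its two leftmost points in $S$) or where $u_{n-2}=w_2$, and thereafter splits on the position of $u_{n-1}$ relative to $w_2$ and to the line $w_1w_2$, rather than on $u_2$ --- but this is just a reorganization of the same case analysis you anticipate.

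Your parenthetical alternative route, however, does not work: the bound $h(3,l)\le l-2$ is false. For $l=4$, take the $3$-cap $S=\{v_1,v_2,v_3\}=\{(0,0),(10,0),(20,-10)\}$; the only candidate $(3,4)$-convexifying point is $v_1$. With $n=4$, take $B=\{u_2,u_3\}=\{(1,10),(2,100)\}$, so that $v_1u_2u_3$ is a steep $3$-cup with $u_2,u_3$ between $v_1$ and $v_2$ in $x$-coordinate and $u_3\in B$. Then the convex hull of $B\cup S$ is the triangle $v_1u_3v_3$, so there is no convex $4$-gon; there is no $3$-cup with its two leftmost points in $S$ (nothing to the right of $v_2$ lies above the line $v_1v_2$, and nothing lies to the right of $v_3$); and there is no $4$-cap with its two rightmost points in $S$ (the slopes of $u_2v_2$ and $u_3v_2$ are both more negative than that of $v_2v_3$, so neither can be prepended to $v_2v_3$ to form a cap). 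Hence $v_1$ is not $(3,4)$-convexifying, $S$ has no $(3,4)$-convexifying point, and $h(3,4)\ge 3>2$. Extending the cap by further shallow points $v_4,v_5,\ldots$ to the right gives $h(3,l)\ge l-1$ for every $l\ge 4$, so the induction on $l$ via Lemma~\ref{subadd} only yields $h(4,l)\le h(4,l-1)+(l-1)$, which is strictly weaker than $\binom{l-1}{2}+1$. The direct case analysis is genuinely necessary.
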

\begin{proof} We split the proof into two parts as in the proof of Theorem \ref{m,4}. First we prove that for the $(4,l)$-free set $S$, if it contains an $(l-1)$-cap with a point lying to the left of it, then the leftmost point of this cap is an $(4,l)$-convexifying point of $S$. Secondly, we prove that if $|S| > {l-1 \choose 2}+1$, then $S$ contains such a cap. 

We proceed with the first part. First note that we cannot deduce this part from Theorem \ref{m,4} by symmetry, since in both of them, we are dealing with $(n-1)$-cups. Assume $S$ contains the $(l-1)$-cap $v_1v_2\dots v_{l-1}$, which is ordered in increasing order of $x$-coordinate, and consider $r \in S$ lying to the left of $v_1$. Note that $r$ must lie above the line spanned by $v_1v_2$, otherwise we get an $l$-cap which is in contradiction with the $(4,l)$-free property of $S$. We show that $v_1$ is an $(4,l)$-convexifying point for $S$. Evidently, $v_1$ is the left endpoint of an $(l-1)$-cap in $S$. Now take an arbitrary natural number $n \ge 4$ and consider the set $B$ with $B \cap S=\emptyset$, such that $B \cup S$ contains the $(n-1)$-cup $v_1u_2u_3\dots u_{n-1}$, again ordered in increasing order of $x$-coordinate, where $u_{n-1} \in B$. Note that if there exists $2\le i \le n-3$ such that $u_i \in S$, then $v_1u_iu_{n-2}u_{n-1}$ forms a $4$-cup with the two leftmost points belonging to $S$, which means $v_1$ is an $(4,l)$-convexifying point for $S$. On the other hand, if we have $u_{n-2}=v_2$, then $rv_1v_2u_{n-1}$ forms a $4$-cup with the required property.

So we can assume that $u_i \in B$ for all $i=2,\dots,n-3$, and $u_{n-2} \neq v_2$. Now we split the rest of the proof into the following cases:
\\
\item[] \textbf{Case a:} \emph{$u_{n-1}$ is to the left of $v_2$ and above the line $v_1v_2$:}
\begin{itemize} 
\item[] \textbf{Subcase a1:} \emph{$u_{n-2}$ is above the line $v_1v_2$:} 
\\ $rv_1u_{n-2}u_{n-1}$ forms a $4$-cup with two leftmost points in $S$. This holds since both of $r, u_{n-2}$ lie above $v_1v_2$, so $rv_1u_{n-2}$ forms a $3$-cup. $v_1u_{n-2}u_{n-1}$ also forms a $3$-cup, by the assumption.
\item [] \textbf{Subcase a2:} \emph{$u_{n-2}$ is below the line $v_1v_2$ and $v_2$ is above the line $u_{n-3}u_{n-2}$:} 
\\ $v_1u_2u_3\dots u_{n-2} v_2u_{n-1}$ forms a convex $n$-gon. This holds since $v_1u_2u_3\dots u_{n-1}$ forms a convex $(n-1)$-gon, and $v_2$ is below both of $v_1u_{n-1}$ and $u_{n-2}u_{n-1}$, and above $u_{n-3}u_{n-2}$.
\item[] \textbf{Subcase a3:} \emph{$u_{n-2}$ is below the line $v_1v_2$ and $v_2$ is below the line $u_{n-3}u_{n-2}$:} 
\\ $u_{n-3}u_{n-2}v_2v_3\dots v_{l-1}$ forms an $l$-cap with two rightmost points in $S$. First note that $u_{n-2} \notin \{v_3,\dots,v_{l-1}\}$, since $x_{u_{n-2}} < x_{u_{n-1}} < x_{v_2} <  x_{v_i}$, for all $i\ge 3$. It is enough to show that $u_{n-3}u_{n-2}v_2$ and $u_{n-2}v_2v_3$ form $3$-caps. The first one is a $3$-cap, since $v_2$ is below $u_{n-3}u_{n-2}$. For the second one, since both of $u_{n-2}$ and $v_3$ are below $v_1v_2$, we get a $3$-cap, too.
\end{itemize} 
\begin{figure} [h]
\begin{center}
\epsfbox{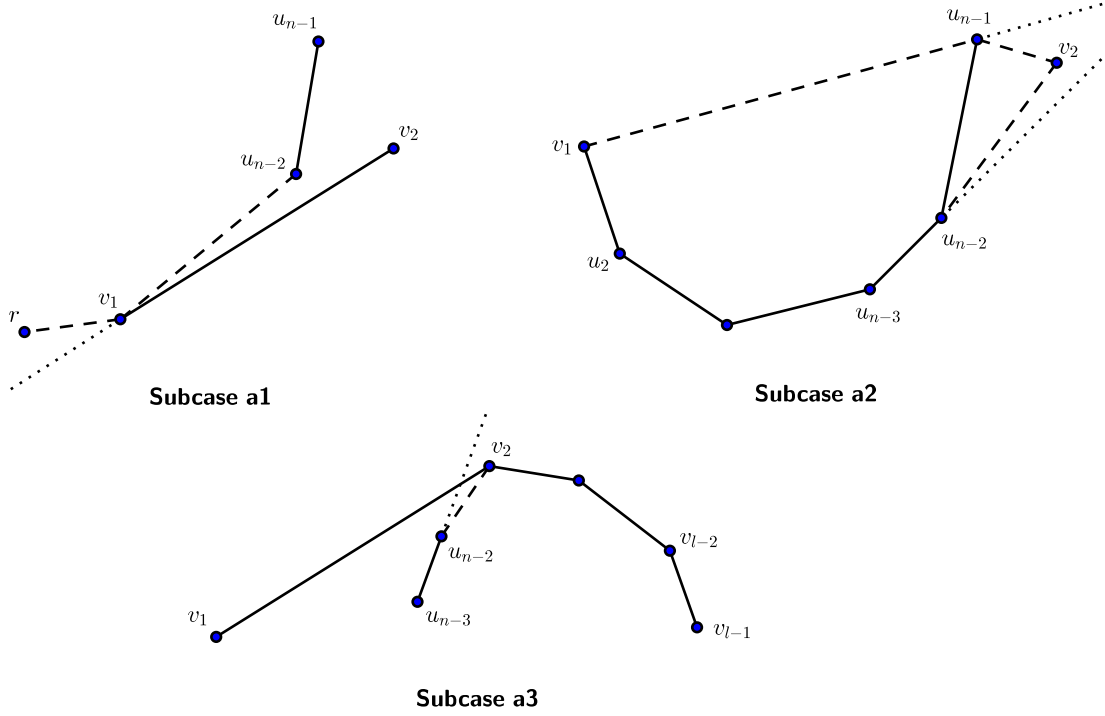}
\end{center}
\caption{Case a in Theorem \ref{4,l}.}
\label{case a2}
\end{figure}
\item[] \textbf{Case b:} \emph{$u_{n-1}$ is to the left of $v_2$ and below the line $v_1v_2$:} 
\begin{itemize} 
\item[] \textbf{Subcase b1:} \emph{$v_2$ is above the line $u_{n-2}u_{n-1}$:} 
\\ $v_1u_2u_3\dots u_{n-1}v_2$ forms an $n$-cup, and as a result, a convex $n$-gon.
\item[] \textbf{Subcase b2:} \emph{$v_2$ is below the line $u_{n-2}u_{n-1}$:} 
\\ $u_{n-2}u_{n-1}v_2v_3\dots v_{l-1}$ forms an $l$-cap with two rightmost points in $S$. First note that with a similar reasoning as Subcase a3, we have $u_{n-2} \notin \{v_3,\dots,v_{l-1}\}$. So it is enough to show that $u_{n-2}u_{n-1}v_2$ and $u_{n-1}v_2v_3$ form $3$-caps. The first one is a $3$-cap since $v_2$ is below $u_{n-2}u_{n-1}$. For the second one, since $u_{n-1}$ and $v_3$ are both below $v_1v_2$, $u_{n-1}v_2v_3$ must form a $3$-cap.
\end{itemize} 
\begin{figure} [h!]
\begin{center}
\epsfbox{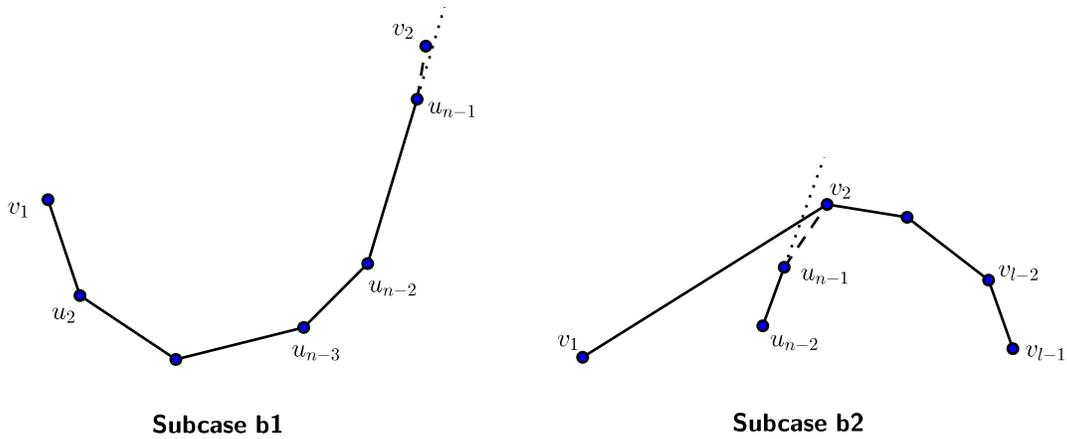}
\end{center}
\caption{Case b in Theorem \ref{4,l}.}
\label{case b2}
\end{figure}
\item[] \textbf{Case c:} \emph{$u_{n-1}$ is to the right of $v_2$ and above the line $v_1v_2$:} 
\\ $rv_1v_2u_{n-1}$ forms a $4$-cup with two leftmost points in $S$. This holds because $rv_1v_2$ forms a $3$-cup, since $r$ is above $v_1v_2$. $u_{n-1}$ is above $v_1v_2$, so $v_1v_2u_{n-1}$ also forms a $3$-cup.
\\
\item[] \textbf{Case d:} \emph{$u_{n-1}$ is to the right of $v_2$ and below the line $v_1v_2$:} 
\\ $v_1u_2u_3\dots u_{n-1}v_2$ forms a convex $n$-gon. This holds since $v_2$ is to the right of $v_1$, to the left of $u_{n-1}$ and above $v_1u_{n-1}$.
\\
So, according to the Definition \ref{good}, $v_1$ is an $(4,l)$-convexifying point for $S$.

The second part can be proved in the same way as the second part in the Theorem \ref{m,4}, either by following that proof or using that result with symmetry in order to replace cups with caps. 
\begin{figure} [h]
\begin{center}
\epsfbox{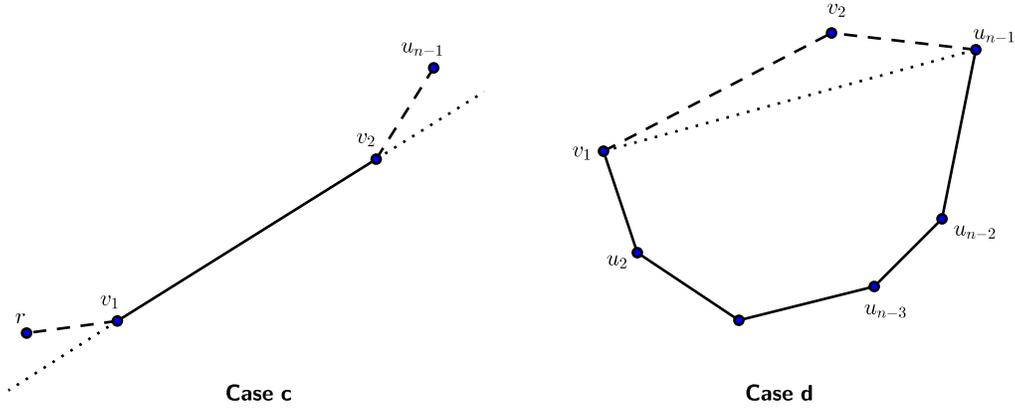}
\end{center}
\caption{Cases c, d in Theorem \ref{4,l}.}
\label{casescd2}
\end{figure}

\end{proof}

Now we combine the Theorems \ref{m,4}, \ref{4,l} with Lemma \ref{subadd} in order to get the following bound on the convexification function $h(m,l)$.
\begin{theorem} \label{boundgood}
For any natural numbers $m,l \ge 4$, we have
$$h(m,l) \le {m+l-4 \choose l-2}-{m+l-6 \choose l-3}.$$

\end{theorem}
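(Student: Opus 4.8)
\section*{Proof proposal for Theorem~\ref{boundgood}}

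The plan is to prove the bound by induction on $m+l$, with Theorems~\ref{m,4} and \ref{4,l} supplying the base cases and Lemma~\ref{subadd} driving the inductive step. Concretely, I claim that for all $m,l\ge 4$ one has $h(m,l)\le \binom{m+l-4}{l-2}-\binom{m+l-6}{l-3}$, and I induct on $N=m+l\ge 8$.

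For the base cases I take $l=4$ (arbitrary $m\ge 4$) and $m=4$ (arbitrary $l\ge 4$). When $l=4$, Theorem~\ref{m,4} gives $h(m,4)\le\binom{m-1}{2}+1$, and a one-line computation shows
\[
\binom{m-1}{2}+1=\frac{m^2-3m+4}{2}=\binom{m}{2}-(m-2)=\binom{m+l-4}{l-2}-\binom{m+l-6}{l-3}\Big|_{l=4},
\]
so the asserted inequality holds (in fact with equality). The case $m=4$ is symmetric: Theorem~\ref{4,l} gives $h(4,l)\le\binom{l-1}{2}+1=\binom{l}{2}-(l-2)$, which is exactly $\binom{m+l-4}{l-2}-\binom{m+l-6}{l-3}$ specialized to $m=4$.

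For the inductive step, suppose $m,l\ge 5$ and that the bound holds for every pair of integers $\ge 4$ with smaller sum. Applying Lemma~\ref{subadd} with $m-1,l-1$ in place of $m,l$ gives $h(m,l)\le h(m,l-1)+h(m-1,l)$. Since $m,l\ge 5$, both $(m,l-1)$ and $(m-1,l)$ consist of integers $\ge 4$ and have sum $N-1$, so the inductive hypothesis applies to each term:
\[
h(m,l-1)\le\binom{m+l-5}{l-3}-\binom{m+l-7}{l-4},\qquad
h(m-1,l)\le\binom{m+l-5}{l-2}-\binom{m+l-7}{l-3}.
\]
Adding these and using Pascal's identity twice, namely $\binom{m+l-5}{l-3}+\binom{m+l-5}{l-2}=\binom{m+l-4}{l-2}$ and $\binom{m+l-7}{l-4}+\binom{m+l-7}{l-3}=\binom{m+l-6}{l-3}$, yields precisely $h(m,l)\le\binom{m+l-4}{l-2}-\binom{m+l-6}{l-3}$, closing the induction.

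There is no genuine obstacle here; the only points that require care are purely bookkeeping. First, one must check that the two base-case estimates coming from Theorems~\ref{m,4} and \ref{4,l} agree numerically with the claimed formula after setting $l=4$ and $m=4$ respectively — this is the short binomial identity above. Second, one must set up the induction so that the hypothesis is available for \emph{both} summands $h(m,l-1)$ and $h(m-1,l)$; this forces the inductive step to be run only when $m,l\ge 5$, which is exactly why the two lines $m=4$ and $l=4$ must be handled separately as base cases rather than derived from subadditivity.
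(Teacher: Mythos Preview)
Your proof is correct and follows essentially the same approach as the paper's: both arguments verify the base cases $m=4$ and $l=4$ via Theorems~\ref{4,l} and~\ref{m,4} (checking the same binomial identity $\binom{k-1}{2}+1=\binom{k}{2}-(k-2)$), and then close the induction by combining the subadditivity Lemma~\ref{subadd} with two applications of Pascal's identity. The only cosmetic difference is that you phrase the induction explicitly on $m+l$, whereas the paper names the right-hand side $g(m,l)$ and observes $g(m+1,l+1)=g(m+1,l)+g(m,l+1)$ before inducting.
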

\begin{proof}
First define the function $g(m,l)$ as
$$g(m,l)= {m+l-4 \choose l-2}-{m+l-6 \choose l-3}.$$
So we need to show that $h(m,l) \le g(m,l)$, for all $m,l\ge 4$. We apply double induction on $m$ and $l$. For the induction basis, suppose $m=4$. Then according to Theorem \ref{4,l} we have
$$h(4,l) \le {l-1 \choose 2}+1={l \choose 2}-{l-2 \choose 1}=g(4,l).$$
Similarly, the other induction basis $l=4$ can be proved by using Theorem \ref{m,4}. Now suppose the inequality holds for $(m+1,l)$ and $(m,l+1)$, we prove it for $(m+1,l+1)$. Note that based on the identity
$${s \choose t}={s-1 \choose t}+{s-1 \choose t-1},$$
we can get that 
$$g(m+1,l+1)=g(m+1,l)+g(m,l+1).$$
By Lemma \ref{subadd} we have
$$h(m+1,l+1) \le h(m+1,l)+h(m,l+1) \le g(m+1,l)+g(m,l+1)=g(m+1,l+1),$$
where the last inequality holds according to the induction hypothesis. This completes the proof.
\end{proof}

\section{Back to the main problem}

In this section, we prove the main result of this paper, Theorem \ref{main}. We start with the following lemma which we need for Theorem \ref{f+g}.

\begin{lemma}[\cite{ES35}] \label{extt} If a point is the rightmost point of a cup (cap) and also the leftmost point of a cap (cup), then the cup or the cap can be extended to a larger cup or cap, respectively.
\end{lemma}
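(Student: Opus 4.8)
The plan is to reduce both assertions to a single three-point observation. I will prove the first assertion, namely that if a point $p$ is simultaneously the rightmost point of a cup and the leftmost point of a cap, then the cup or the cap can be extended by one point; the second assertion then follows by applying this to the vertically reflected point set $\{(x,-y): (x,y)\in S\}$, which is again non-vertical and in general position, and which interchanges cups with caps while preserving the left-right order. Write the cup as $q_1 q_2\cdots q_{k-1} p$ and the cap as $p\, r_2 r_3\cdots r_j$, both listed in increasing order of $x$-coordinate (so $k,j\ge 3$ by the definition of cups and caps). Put $a:=q_{k-1}$ and $b:=r_2$, so that $a_x<p_x<b_x$. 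Since $S$ is in general position, $a,p,b$ are not collinear, hence $a,p,b$ form either a $3$-cup or a $3$-cap.

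Suppose first that $a,p,b$ form a $3$-cup, i.e. the slope of the segment $ap$ is strictly less than the slope of $pb$. I then claim $q_1 q_2\cdots q_{k-1}\,p\,b$ is a $(k+1)$-cup: its $x$-coordinates are strictly increasing, the consecutive slopes along $q_1\cdots q_{k-1}p$ are strictly increasing because this sequence is a cup, and the final slope of $pb$ strictly exceeds the slope of $q_{k-1}p=ap$; so the cup has been extended. Suppose instead that $a,p,b$ form a $3$-cap, i.e. the slope of $ap$ strictly exceeds the slope of $pb$. Then symmetrically $a\,p\,r_2\cdots r_j$ is a $(j+1)$-cap: its $x$-coordinates are strictly increasing, its first slope $ap$ strictly exceeds its second slope $pr_2=pb$, and the remaining slopes along $p r_2\cdots r_j$ are strictly decreasing because that sequence is a cap; so the cap has been extended. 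In both cases exactly one of the two given configurations grows by one point, which proves the first assertion and therefore, via the reflection above, the lemma.

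This argument is elementary and I anticipate no genuine obstacle; the only thing to watch is the bookkeeping of the slope inequalities precisely at the junction point $p$, together with the harmless fact that the convention $m,l\ge 3$ guarantees that the neighbours $a$ and $b$ exist, so that the three-point configuration $a,p,b$ is well defined.
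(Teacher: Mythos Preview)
Your proof is correct and follows essentially the same approach as the paper: the paper also looks at the three points $v_{m-1}, v_m, u_2$ (your $a,p,b$) and splits into two cases according to whether $u_2$ lies above or below the line $v_{m-1}v_m$, which is exactly your dichotomy between $a,p,b$ forming a $3$-cup or a $3$-cap. Your explicit reduction of the parenthetical statement to the first via the reflection $(x,y)\mapsto(x,-y)$ is a nice touch that the paper leaves implicit.
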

\begin{proof}
Denote the cup by $v_1v_2\dots v_m$ and the cap by $v_mu_2\dots u_l$. We split the rest of the proof into the following two cases. See Figure \ref{exttfig}.
\item[] \textbf{Case a:} \emph{$u_2$ lies above the line spanned by $v_{m-1}v_m$:} $v_1v_2\dots v_mu_2$ forms an $(m+1)$-cup.
\item[] \textbf{Case b:} \emph{$u_2$ lies below the line spanned by $v_{m-1}v_m$:} $v_{m-1}v_mu_2\dots u_l$ forms an $(l+1)$-cap.

\begin{figure} [h]
\begin{center}
\epsfbox{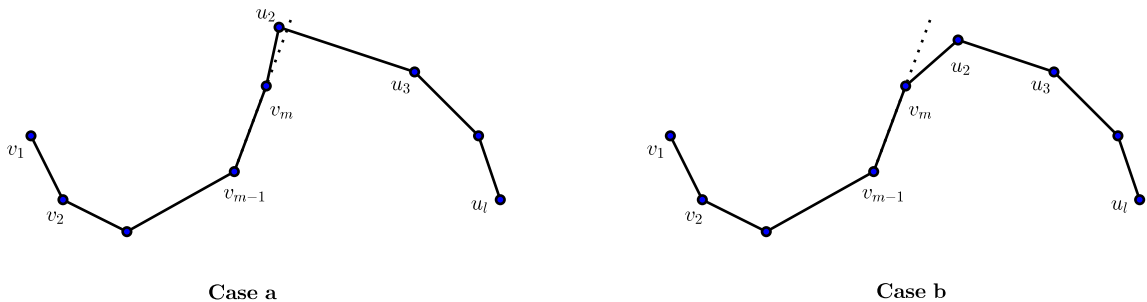}
\end{center}
\caption{Case by case analysis in Lemma \ref{extt}.}
\label{exttfig}
\end{figure}

\end{proof}

\begin{theorem} \label{f+g}
Let $n \ge 6$ be a natural number and consider the set $S$ of $f(n-1,n-1)+g(n,n-2)+1$ points in the plane, where the function $g$ is defined in Theorem \ref{boundgood}. Then $S$ contains an $(n-1)$-cap or a convex $n$-gon.
\end{theorem}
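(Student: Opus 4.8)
The plan is to argue by contradiction. Suppose $S$ contains no $(n-1)$-cap and no convex $n$-gon. An $n$-cup is a convex $n$-gon, so $S$ is $(n,n-1)$-free; and since $|S| = f(n-1,n-1)+g(n,n-2)+1 > f(n-1,n-1)$, Theorem~\ref{ES35} produces an $(n-1)$-cup in $S$ (an $(n-1)$-cap being excluded). Next I would bring in the upper/lower partition $S = U_S \sqcup L_S$ of Definition~\ref{UL}, which is nontrivial by Lemma~\ref{part}, and read off its two parts using Lemma~\ref{ext}. The set $L_S$ is $(n-1,n-1)$-free: it contains no $(n-1)$-cap as a subset of $S$, and an $(n-1)$-cup inside $L_S$ would, by the second item of Lemma~\ref{ext}, be extendable on the left by a point of $U_S$ to an $n$-cup, that is, a convex $n$-gon. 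Hence $|L_S| \le f(n-1,n-1)$ by Theorem~\ref{ES35}, so $|U_S| \ge g(n,n-2)+1$. The set $U_S$ is $(n,n-2)$-free: it contains no $n$-cup, and an $(n-2)$-cap inside $U_S$ would, by the first item of Lemma~\ref{ext}, be extendable on the right by a point of $L_S$ to an $(n-1)$-cap in $S$. Since $|U_S| > g(n,n-2) \ge h(n,n-2)$ by Theorem~\ref{boundgood}, Definition~\ref{goodf} hands us an $(n,n-2)$-convexifying point $s$ for $U_S$.

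Now the idea is to invoke the defining property of $s$ with the external set $B := L_S$, so that $B \cup U_S = S$. Concretely I would exhibit an $(n-1)$-cup of $S$ whose left endpoint is $s$ and whose right endpoint lies in $L_S$; then Definition~\ref{good}, applied with the convexifying-point parameters $(n,n-2)$ and convex-polygon parameter $n$, yields inside $S$ one of the following: an $n$-cup with its two leftmost points in $U_S$, which is a convex $n$-gon; an $(n-2)$-cap with its two rightmost points in $U_S$, which by the first item of Lemma~\ref{ext} extends on the right by a point of $L_S$ to an $(n-1)$-cap of $S$; or a convex $n$-gon outright. Every alternative contradicts the standing assumption, so the theorem follows.

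The real work, and the step I expect to be the main obstacle, is producing that $(n-1)$-cup through $s$ with right endpoint in $L_S$. For this I would combine: the $(n-1)$-cup already known to live in $S$ (its left endpoint necessarily lies in $U_S$, since otherwise the second item of Lemma~\ref{ext} would extend it to an $n$-cup); the fact that $s$ is the leftmost point of an $(n-3)$-cap in $U_S$, which the first item of Lemma~\ref{ext} lengthens to an $(n-2)$-cap of $S$ with left endpoint $s$; the size estimate $|U_S| \ge g(n,n-2)+1 > f(n-2,n-1)$ (a one-line binomial computation), which by Theorem~\ref{ES35} forces an $(n-2)$-cup inside the $(n-1)$-cap-free set $U_S$; and Lemma~\ref{extt}, which fuses a cup and a cap sharing an endpoint into a strictly longer cup or cap. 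Threading these together so that one obtains an $(n-1)$-cup pinned at $s$ with right endpoint in $L_S$ --- rather than, say, forcing the cap to grow into a forbidden $(n-1)$-cap or producing a cup anchored at the wrong point --- is exactly the ``inductive construction of the point $s$'' foreshadowed in the motivation after Corollary~\ref{corES35}, and it will require a case analysis in the style of the proofs of Theorems~\ref{m,4} and~\ref{4,l}.
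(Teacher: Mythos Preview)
Your framework is exactly right up to the point where you extract a single $(n,n-2)$-convexifying point $s\in U_S$, and your analysis of what happens \emph{once} you have an $(n-1)$-cup with left endpoint $s$ and right endpoint in $L_S$ is correct. The genuine gap is precisely the step you flag as ``the main obstacle'': with only one convexifying point $s$ in hand, there is no reason whatsoever that $s$ should be the left endpoint of any $(n-1)$-cup in $S$, and the ingredients you list (an $(n-2)$-cap from $s$, some $(n-2)$-cup in $U_S$, Lemma~\ref{extt}) do not combine to produce one --- Lemma~\ref{extt} only helps when the cup's \emph{right} endpoint meets the cap's left endpoint, and even then it may lengthen the cap rather than the cup. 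A case analysis in the style of Theorems~\ref{m,4} and~\ref{4,l} does not rescue this, because those arguments are about verifying the convexifying property, not about manufacturing a cup through a prescribed point.

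The paper sidesteps this difficulty entirely with a counting trick you are missing: rather than fixing one convexifying point, observe that $|U_S|=g(n,n-2)+k$ with $k:=f(n-1,n-1)+1-|L_S|\ge 1$, and apply Theorem~\ref{boundgood} \emph{iteratively} (remove a convexifying point, the remainder is still $(n,n-2)$-free of size $>h(n,n-2)$, repeat; a convexifying point for a subset is also one for $U_S$) to obtain a set $G\subset U_S$ of $k$ convexifying points. Then $|G\cup L_S|=f(n-1,n-1)+1$, so Theorem~\ref{ES35} gives an $(n-1)$-cup or $(n-1)$-cap inside $G\cup L_S$. The cap case is immediate; for the cup $v_1\cdots v_{n-1}$, if $v_1\in L_S$ Lemma~\ref{ext} extends it to an $n$-cup, and if $v_{n-1}\in G$ then Lemma~\ref{extt} (with the $(n-2)$-cap from $v_{n-1}$) finishes. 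Otherwise $v_1\in G$ and $v_{n-1}\in L_S$, which is exactly the configuration needed to invoke Definition~\ref{good} with $B=L_S$ --- no construction required.
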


\begin{proof}
Consider the partition of $S$ into the upper and lower subsets $U_S,L_S$ as in the Definition \ref{UL}. If $|L_S| > f(n-1,n-1)$, then it either contains an $(n-1)$-cup or an $(n-1)$-cap. In the former case, we get an $n$-cup by the Lemma \ref{ext}, and in the latter case, we are immediately done. So we can assume that $|L_S| \le f(n-1,n-1)$. On the other hand, we can assume that $U_S$ is $(n,n-2)$-free, since otherwise we are immediately done or again by Lemma \ref{ext}. We have
$$|U_S|=|S|-|L_S|=g(n,n-2)+(f(n-1,n-1)+1-|L_S|),$$ 
where the last term is positive. So by applying Theorem \ref{boundgood} iteratively, we get $f(n-1,n-1)+1-|L_S|$ $(n,n-2)$-convexifying points for $U_S$. Denote the set of these convexifying points by $G$. Consider the set $G \cup L_S$. We have 
$$|G \cup L_S|=f(n-1,n-1)+1,$$
so it must contain either an $(n-1)$-cup or an $(n-1)$-cap. In the latter case, we are done. In the former case, denote this $(n-1)$-cup by $v_1v_2\dots v_{n-1}$. If $v_1 \in L_S$, by the Lemma \ref{ext} we get an $n$-cup in $S$, which means we are done. On the other hand, every point of $G$ is the leftmost point of an $(n-3)$-cap in $U_S$, which by Lemma \ref{ext}, can be extended to an $(n-2)$-cap in $S$ by adding a point to the right of it. So if $v_{n-1} \in G$, then by Lemma \ref{extt}, we either get an $n$-cup or an $(n-1)$-cap, which finishes the proof. 

So we can assume that $v_1 \in G$ and $v_{n-1} \in L_S$. Now by Definition \ref{good}, we conclude that $S$ contains at least one of the followings:
\begin{itemize}
\item [1.] An $n$-cup;
\item [2.] An $(n-2)$-cap whose two rightmost points belong to $U_S$;
\item [3.] A convex $n$-gon. 
\end{itemize}
In the cases 1, 3, we are done. In case 2, according to Lemma \ref{ext}, we get an $(n-1)$-cap, so we are also done in this case. This completes the proof of the theorem.
\end{proof}

The proof of the following theorem is completely based on and similar to the proof of Theorem 5 in \cite{TV03}.
\begin{theorem} [\cite{TV98}, \cite{TV03}] \label{PT}
Define $p(n)$ to be the minimum natural number such that every set with at least $p(n)$ points in the plane contains either an $(n-1)$-cap or a convex $n$-gon. Then we have 
$$ES(n) \le p(n)+1.$$
\end{theorem}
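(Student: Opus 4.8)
The plan is to show that every set $P$ of $p(n)+1$ points contains a convex $n$-gon, which immediately gives $ES(n)\le|P|=p(n)+1$. As remarked after Theorem~\ref{main}, we may assume $P$ is in general position and non-vertical, and (after a generic rotation if necessary) that $P$ has a unique lowest point $v$, which is then a vertex of its convex hull. Following the proof of Theorem~5 in \cite{TV03}, the idea is to use $v$ as a ``spare'' point that closes an $(n-1)$-cap of $P\setminus\{v\}$ into a convex $n$-gon, once $P\setminus\{v\}$ has been put into suitable position by the projective transformation of \cite{TV98}.

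First I would translate so that $v=(0,0)$ and apply $\phi(x,y)=(x/y,-1/y)$, which extends to the projective transformation $[x:y:z]\mapsto[x:-z:y]$ of $\mathbb{RP}^2$. Since every point of $P\setminus\{v\}$ has positive second coordinate, $\phi$ is an orientation-preserving diffeomorphism of the open upper half-plane onto the open lower half-plane; it carries lines through $v$ to vertical lines, carries the line at infinity to the line $\{y=0\}$, and carries $v$ to the point at infinity in the downward vertical direction. Hence $Q:=\phi(P\setminus\{v\})$ is a set of exactly $p(n)$ points lying in $\{y<0\}$; it is in general position (collinearity is preserved by projective maps) and non-vertical (a vertical line pulls back to a line through $v$, and no line through $v$ meets $P\setminus\{v\}$ twice, as $P$ is in general position). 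Two features of $\phi$ are what make the argument work: (i) the left-to-right order of $Q$ is the clockwise angular order of $P\setminus\{v\}$ around $v$, and, since $\phi$ is orientation-preserving on the upper half-plane, a $k$-cap of $Q$ together with $\phi(v)$ pulls back to a convex $(k+1)$-gon of $P$ having $v$ as its lowest vertex; (ii) since $\mathrm{conv}(Q)\subseteq\{y<0\}$ is disjoint from $\{y=0\}$, the image of the line at infinity under $\phi$, the inverse $\phi^{-1}$ carries any subset of $Q$ in convex position to a subset of $P\setminus\{v\}$ in convex position.

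With these in hand the proof concludes in a line: applying the definition of $p(n)$ to the $p(n)$-point set $Q$, either $Q$ contains a convex $n$-gon, which by~(ii) pulls back to a convex $n$-gon of $P$, or $Q$ contains an $(n-1)$-cap, which by~(i) (with $k=n-1$), together with $\phi(v)$, pulls back to a convex $n$-gon of $P$; either way $P$ contains a convex $n$-gon, so $ES(n)\le p(n)+1$. The step that requires the most care is establishing~(i) and~(ii): one must check that a $k$-cap of $Q$ really becomes a convex $(k+1)$-gon once $v$ is adjoined — this is where the orientation-preservation of $\phi$ on the upper half-plane, together with the fact that $v$ lies below every point of $P\setminus\{v\}$, is used — and that $\phi^{-1}$ cannot destroy convex position, which holds precisely because the preimage of the line at infinity, the line $\{y=0\}$, misses $\mathrm{conv}(Q)$. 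I would settle this either by a direct check of the turning directions of consecutive triples, or simply by invoking the corresponding lemma of \cite{TV98}.
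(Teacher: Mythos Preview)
Your proposal is correct and follows essentially the same approach as the paper: both isolate a convex-hull vertex, apply the T\'oth--Valtr projective transformation so that this vertex can close any $(n-1)$-cap of the remaining $p(n)$ points into a convex $n$-gon, and then invoke the definition of $p(n)$. The only cosmetic difference is that you write down the transformation explicitly as $\phi(x,y)=(x/y,-1/y)$ and send the special vertex $v$ to the downward point at infinity, whereas the paper describes the map abstractly and keeps the image $T(s)$ a finite point lying below every line spanned by $T(S)\setminus\{T(s)\}$; both realizations yield the same conclusion.
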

\begin{proof}
Let $S$ be a set of $p(n)+1$ points in the plane. We have to show that $S$ contains $n$ points in convex position. Let $s$ be a vertex of the convex hull of $S$, $\mathrm{conv}(S)$, and take the point $s'$ outside $\mathrm{conv}(S)$ such that none of the lines spanned by the points of $S\setminus \{s\}$ intersects the segment $ss'$. Also, take a line $l$ through $s'$ which does not intersect $\mathrm{conv}(S)$. 

Now consider the projective transformation taking the line $l$ to the line at infinity and also mapping the segment $ss'$ to the vertical half-line emanating downwards from $T(s)$. Since the line $l$ avoided $\mathrm{conv}(S)$, $T$ does not change convexity on the points of $S$, i.e. $P \subset S$ is in convex position if and only if $T(P) \subset T(S)$ is in convex position. 

One can see that none of the lines spanned by the points of $T(S) \setminus \{T(s)\}$ intersects the vertical half-line emanating downwards from $T(s)$. Furthermore, $T(S) \setminus \{T(s)\}$ is a non-vertical set. So for any natural number $l$, any $l$-cap in $T(S) \setminus \{T(s)\}$ can be extended to a convex $(l+1)$-gon by adding the point $T(s)$. Therefore, because $|S|=p(n)+1$, we have $|T(S) \setminus \{T(s)\}|=p(n)$, and by the definition of the function $p(n)$, we get that $T(S) \setminus \{T(s)\}$ must contain either an $(n-1)$-cap or a convex $n$-gon. Based on what was stated before, $T(S)$ contains a convex $n$-gon, and as a result, $S$ contains a convex $n$-gon, as well. This completes the proof.

\end{proof}

Finally, we prove Theorem \ref{main}, and use it to obtain the asymptotic upper bound \eqref{eq3}.
\begin{proof}[Proof of Theorem \ref{main}] Recall the function $p(n)$ from Theorem \ref{PT}. By Theorem \ref{f+g} we get
\begin{align*}
p(n) \le f(n-1,n-1)+g(n,n-2)+1&={2n-6 \choose n-3}+{2n-6 \choose n-2}-{2n-8 \choose n-3}+1 \\
& ={2n-5 \choose n-2}-{2n-8 \choose n-3}+1.
\end{align*}
Combining the above with 
$$ES(n) \le p(n)+1$$
from Theorem \ref{PT}, we get the desired bound.
\end{proof}
\begin{corollary} We have
$$\limsup\limits_{n\rightarrow\infty} \frac{ES(n)}{{2n-5 \choose n-2}} \le \frac{7}{8}.$$
\end{corollary}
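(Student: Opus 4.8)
The plan is to divide the bound from Theorem \ref{main} through by $\binom{2n-5}{n-2}$ and pass to the limit. For $n \ge 6$, Theorem \ref{main} gives
$$\frac{ES(n)}{\binom{2n-5}{n-2}} \le 1 - \frac{\binom{2n-8}{n-3}}{\binom{2n-5}{n-2}} + \frac{2}{\binom{2n-5}{n-2}},$$
so it suffices to show that the middle term on the right tends to $\frac{1}{8}$ and the last term tends to $0$ as $n \to \infty$.

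For the last term this is immediate, since $\binom{2n-5}{n-2} \to \infty$. For the middle term I would first rewrite the quotient using factorials, the key observation being that $2n-5 = 2(n-2)-1$, so that both binomial coefficients have the same "central" shape. Setting $k = n-2$, we have $2n-5 = 2k-1$, $2n-8 = 2k-4$, and $n-3 = k-1$, hence
$$\frac{\binom{2n-8}{n-3}}{\binom{2n-5}{n-2}} = \frac{\binom{2k-4}{k-1}}{\binom{2k-1}{k}} = \frac{(2k-4)!\,k!\,(k-1)!}{(k-1)!\,(k-3)!\,(2k-1)!} = \frac{k(k-1)(k-2)}{(2k-1)(2k-2)(2k-3)},$$
which converges to $\frac{1}{8}$ as $k \to \infty$. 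Combining the two limits yields
$$\limsup\limits_{n\rightarrow\infty} \frac{ES(n)}{\binom{2n-5}{n-2}} \le 1 - \frac{1}{8} + 0 = \frac{7}{8},$$
as desired.

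There is no genuine obstacle here; the corollary is a routine asymptotic consequence of Theorem \ref{main}. The only point requiring a little care is the bookkeeping in the binomial simplification — in particular, using $2n-5 = 2(n-2)-1$ so that the factorials telescope cleanly and the leading-order cubic terms cancel to give exactly the constant $\frac{1}{8}$ (and not, say, $\frac{1}{4}$ or some other value), which is what pins down the final bound at $\frac{7}{8}$.
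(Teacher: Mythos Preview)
Your proof is correct and follows the same approach as the paper: divide the bound of Theorem \ref{main} by $\binom{2n-5}{n-2}$ and pass to the limit, using that $\binom{2n-8}{n-3}\big/\binom{2n-5}{n-2}\to\tfrac{1}{8}$. In fact you supply more detail than the paper does, since the paper simply asserts the value $\tfrac{1}{8}$ for this limit without writing out the factorial simplification.
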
 
\begin{proof}
By Theorem \ref{main}, it is enough to show that 
$$ \limsup\limits_{n\rightarrow\infty} \frac{{2n-5 \choose n-2}-{2n-8 \choose n-3}+2}{{2n-5 \choose n-2}}=\frac{7}{8}.$$
But we have
$$ \limsup\limits_{n\rightarrow\infty} \frac{{2n-5 \choose n-2}-{2n-8 \choose n-3}+2}{{2n-5 \choose n-2}}=1+\limsup\limits_{n\rightarrow\infty} \frac{-{2n-8 \choose n-3}}{{2n-5 \choose n-2}}=1-\liminf\limits_{n\rightarrow\infty} \frac {{2n-8 \choose n-3}}{{2n-5 \choose n-2}}=1-\frac{1}{8}=\frac{7}{8}.$$

\end{proof}

\end{document}